\tikzstyle{vertex}=[circle, draw, inner sep=0pt, minimum size=6pt]
\newcommand{\vertex}{\node[vertex]}
\newtheorem{theorem}{Theorem}[section]
\newtheorem{lemma}[theorem]{Lemma}
\newtheorem{proposition}[theorem]{Proposition}
\newtheorem{problem}[theorem]{Problem}
\newtheorem{corollary}[theorem]{Corollary}
\newtheorem{definition}[theorem]{Definition}
\newcommand{\s}{{\rm s}}
\newcommand{\e}{{\rm e}}
\newcommand{\vp}{{\rm vp}}
\newcommand{\gp}{{\rm gp}}
\newcommand{\rad}{{\rm rad}}
\newcommand{\ext}{{\rm Ext}}
\newcommand{\diam}{{\rm diam}}
\numberwithin{figure}{section}
\begin{document}

	\begin{frontmatter}
		
		
		\title{On the Vertex Position Number of Graphs}
		

		\author[label1]{Maya G.S. Thankachy}
		\ead{mayagsthankachy@gmail.com}
		\author[label2]{Elias John Thomas}
		\ead{eliasjohnkalarickal@gmail.com}
		\author[label1]{Ullas Chandran S. V.}
		\ead{svuc.math@gmail.com}
		\author[label3]{James Tuite}
		\ead{james.t.tuite@open.ac.uk}
		\author[label4]{Gabriele {Di Stefano}}
		\ead{gabriele.distefano@univaq.it}
		\author[label3]{Grahame Erskine}
		\ead{grahame.erskine@open.ac.uk}
		
	\address[label1]{Department of Mathematics, Mahatma Gandhi College, University of Kerala, Thiruvananthapuram-695004, Kerala, India}
		
		\address[label2]{Department of Mathematics, Mar Ivanios College, University of Kerala, Thiruvananthapuram-695015, Kerala, India}
		
		\address[label3]{Department of Mathematics and Statistics, Open University, Walton Hall, Milton Keynes, UK}

		\address[label4]{Department of Information Engineering, Computer Science and Mathematics, University of L'Aquila,  Italy}

		\begin{abstract}
		In this paper we generalise the notion of visibility from a point in an integer lattice to the setting of graph theory. For a vertex $x$ of a connected graph $G$, we say that a set $S \subseteq V(G)$ is an \emph{$x$-position set} if for any $y \in S$ the shortest $x,y$-paths in $G$ contain no point of $S\setminus \{ y\}$. We investigate the largest and smallest orders of maximum $x$-position sets in graphs, determining these numbers for common classes of graphs and giving bounds in terms of the girth, vertex degrees, diameter and radius. Finally we discuss the complexity of finding maximum vertex position sets in graphs. 
		\end{abstract}
		
		\begin{keyword}
			geodesic \sep vertex position set \sep vertex position number \sep general position
			
			\MSC 05C12 \sep 05C69 
		\end{keyword}
		
	\end{frontmatter}


\section{Introduction}\label{sec:Intro}
All graphs considered in this paper are finite, undirected and simple.  For a graph $G$ we will denote the subgraph induced by a subset $S \subseteq V(G)$ by $G[S]$. The \emph{distance} $d(u,v)$ between two vertices $u$ and $v$ in a connected graph $G$ is the length of a shortest $u,v$-path in $G$; any such path is called a \emph{geodesic}. The distance is a metric on the vertex set $V$. The diameter $\diam(G)$ of a connected graph $G$ is the length of any longest geodesic. For any vertex $u$ of $G$, the \emph{eccentricity} of $u$ is $\e(u) = \max \{d(u, v) : v \in V \} $. A vertex $v$ of $G$ such that $d(u,v) = \e(u)$ is called an \emph{eccentric vertex} of $u$. The neighborhood of a vertex $v$ is the set $N(v)$ consisting of all vertices $u$ which are adjacent with $v$. A vertex $v$ is \emph{simplicial} if the subgraph induced by its neighborhood $N(v)$ is complete; we will denote the number of simplicial vertices of a graph $G$ by $\s(G)$ and the set of all simplicial vertices of $G$ by $\ext (G)$. A set of vertices in a graph is \emph{independent} if no two vertices in the set are adjacent; the independence number $\alpha (G)$ is the number of vertices in a largest independent set of $G$. A graph is a \emph{block graph} if every maximal $2$-connected component is a clique. For basic graph theoretic terminology not defined here we refer to~\cite{rgbuck, rcbook}.

Visibility and illumination problems are among the most attractive and interesting research topics in combinatorics, geometry and number theory~\cite{boltjansky}. Such problems have been studied intensively in the context of the integer lattice; a set $\Lambda $ of points of the lattice is \emph{visible} from a point $x$ if for any $y \in \Lambda $ the line segment from $x$ to $y$ contains no other points of $\Lambda $. A well-known result from elementary analytic number theory first proved by Sylvester~\cite{Syl} states that the density of the set of points in the integer lattice that are visible from the origin is $\frac{6}{\pi ^2}$~\cite{Apo}. In particular, in Chapter III of~\cite{Hardy-2008}, it is shown how to place a set of points with positive integer coordinates $(i,j)$, $j\leq i$, in such a way that each point is visible from the origin $(0,0)$, by also maximising the number of points with the same abscissa. This construction has interesting relations with the Farey series and Euler's totient function $\phi$. Other interesting visibility problems in integer lattices can be found in~\cite{erdos, hammer}.

In recent years the algorithmic component of visibility problems has attracted great attention under the name \emph{art gallery} or \emph{watchman} problems, which lie in the intersection of combinatorial and computational geometry~\cite{O'Rouke}. Art gallery problems, theorems and algorithms are so named after the celebrated 48 years old problem posed by V. Klee. In 1973 he asked the following question: `What is the minimum number of guards sufficient to cover the interior of an $n$-wall gallery?'. This problem was solved by Chva'tal and subsequently by Fisk. By creating idealised situations such as obstacles, guards, etc., the theory succeeds in abstracting the algorithmic essence of many visibility problems.
  
Taking our inspiration from the result of Sylvester~\cite{Syl}, in this paper we consider a generalisation of `local visibility problems' to the context of the general position problem in graph theory. The general position problem originated in Dudeney's no-three-in-line problem~\cite{dudeney-1917} and the general position subset selection problem~\cite{froese-2017, payne-2013} from discrete geometry. These problems were generalised to graphs independently in~\cite{ullas-2016} and~\cite{manuel-2018a}. A set $S$ of vertices of a graph $G$ is in \emph{general position} if for any $u,v \in S$ any $u,v$-geodesic does not intersect $S \setminus \{ u,v\} $. The \emph{general position number} $\gp (G)$ of $G$ is the number of vertices in a largest general position set in $G$. We refer the reader to~\cite{anand-2019,manuel-2018b,elias-2020} for more information on the general position problem.   

In a recent paper Di Stefano~\cite{DiStefano-2022} introduced the concept of a \emph{mutual visibility set} in a graph; a set $S$ of points in a graph $G$ are \emph{mutually visible} if for any $u,v \in S$ there exists a shortest $u,v$-path in $G$ that does not pass through $S \setminus \{ u,v\} $; the \emph{mutual visibility number} $\mu (G)$ of $G$ is the number of vertices in a largest mutual visibility set in $G$. In~\cite{DiStefano-2022} the mutual visibility number of some classes of graphs are determined and it is shown that the problem of finding a largest mutual visibility set is NP-complete for general graphs. 

We now study a `local' version of these problems using a parameter that we call the \emph{vertex position number} of a graph. The plan of this paper is as follows. In Section~\ref{Section: vertex position sets} we provide some bounds on the vertex position numbers of a graph. In Section~\ref{Section: graph classes} the vertex position numbers of some common classes of graphs are determined. We characterise the graphs with very large or small vertex position numbers in Section~\ref{Section: characterisation}. Finally in Section~\ref{Section: complexity} we consider the computational complexity of finding the vertex position number of a graph.

\section{Vertex position sets in graphs}\label{Section: vertex position sets}
In this section we derive bounds for the vertex position numbers of a graph in terms of the minimum and maximum degrees, radius and diameter. First we formally define the vertex position numbers.

\begin{definition}\label{Def: main definition}
For any graph $G$ and a fixed vertex $x \in V(G)$, a set $S_x\subseteq V(G)$ is an \emph{$x$-position set} if for any $y\in S_x$ no vertex of $S_x\setminus \{ y\} $ lies on any $x,y$-geodesic in $G$. The \emph{$x$-position number} of $G$ is defined to be the maximum cardinality of an \emph{$x$-position set} and is denoted by \emph {$p_x(G)$} or simply \emph{$p_x$}. An \emph{$x$-position set} of cardinality \emph{$p_x(G)$} is called a \emph{$p_x$-set}. The maximum value of $p_x(G)$ among all vertices $x$ of $G$ is called the \emph{upper vertex position number} $\vp(G)$ (or simply the \emph{vertex position number}) of $G$. Similarly, the minimum value of $p_x(G)$ among all vertices of $G$ is called the \emph{minimum vertex position number} $\vp^-(G)$ of $G$.
\end{definition}
To illustrate these concepts, consider the graph $G$ in Figure~\ref{fig2.1}. We give the $x$-position numbers of $G$ for representative vertices in Table~\ref{Table 1}, together with a (not necessarily unique) $x$-position set. We see from the table that $\vp^-(G) = 4$ and $\vp(G) = 11$.

\begin{figure}
			\centering
			\begin{tikzpicture}[x=0.4mm,y=-0.4mm,inner sep=0.2mm,scale=0.7,very thick,vertex/.style={circle,draw,minimum size=15,fill=white}]

				\node at (-150,0) [vertex] (c1) {$c_1$};
				\node at (-50,0) [vertex] (c2) {$c_2$};
				\node at (50,0) [vertex] (c3) {$c_3$};
				\node at (150,0) [vertex] (c4) {$c_4$};
				
				\node at (-150,50) [vertex] (b1) {$b_1$};
				\node at (-50,50) [vertex] (b2) {$b_2$};
				\node at (50,50) [vertex] (b3) {$b_3$};
				\node at (150,50) [vertex] (b4) {$b_4$};
				
				\node at (-150,100) [vertex] (a1) {$a_1$};
				\node at (-50,100) [vertex] (a2) {$a_2$};
				\node at (50,100) [vertex] (a3) {$a_3$};
				\node at (150,100) [vertex] (a4) {$a_4$};

				\node at (0,-50) [vertex] (x) {$x$};
				
				\path
				
				(a1) edge (b2)
				(a1) edge (b3)
				(a1) edge (b4)
				(a2) edge (b1)
				(a2) edge (b3)
				(a2) edge (b4)
				(a3) edge (b1)
				(a3) edge (b2)
				(a3) edge (b4)
				(a4) edge (b1)
				(a4) edge (b2)
				(a4) edge (b3)
				
				(c1) edge (b2)
				(c1) edge (b3)
				(c1) edge (b4)
				(c2) edge (b1)
				(c2) edge (b3)
				(c2) edge (b4)
				(c3) edge (b1)
				(c3) edge (b2)
				(c3) edge (b4)
				(c4) edge (b1)
				(c4) edge (b2)
				(c4) edge (b3)
				
				(c1) edge (x)
				(c2) edge (x)
				(c3) edge (x)
				(c4) edge (x)
				
				(c1) edge (b1)
				(c2) edge (b2)
				(c3) edge (b3)
				(c4) edge (b4)
				(a1) edge (b1)
				(a2) edge (b2)
				(a3) edge (b3)
				(a4) edge (b4)
				
				(c1) edge (c2)
				(c2) edge (c3)
				(c3) edge (c4)
				(c1) edge[bend left = 20] (c3)
				(c1) edge[bend left = 20] (c4)
				(c2) edge[bend left = 20] (c4)
				
				(b1) edge (b2)
				(b2) edge (b3)
				(b3) edge (b4)
				(b1) edge[bend left = 20] (b3)
				(b1) edge[bend left = 20] (b4)
				(b2) edge[bend left = 20] (b4)
				
				(a1) edge (a2)
				(a2) edge (a3)
				(a3) edge (a4)
				(a1) edge[bend left = 20] (a3)
				(a1) edge[bend left = 20] (a4)
				(a2) edge[bend left = 20] (a4)
				;
			\end{tikzpicture}
			\caption{}
			\label{fig2.1}
		\end{figure}

\begin{table}[ht]
\centering
\begin{tabular}{|c| c| c|}
\hline\hline
Vertex & $p_x$-set & $p_x(G)$ \\
\hline
$x$ & $\{ c_1,c_2,c_3,c_4\} $ & $4$\\
\hline
$c_1$ & $\{ x,c_2,c_3,c_4,b_1,b_2,b_3,b_4 \}$ & $8$\\
\hline
$b_1$ & $\{ c_1,c_2,c_3,c_4,b_2,b_3,b_4,a_1,a_2,a_3,a_4 \} $ & $11$\\
\hline
$a_1$ & $\{ a_2,a_3,a_4,b_1,b_2,b_3,b_4\}$ & $7$\\
\hline
\end{tabular}
\caption{ }
\label{Table 1}
\end{table}

Unless stated otherwise (for example in Theorem~\ref{thm:Nordhasu Gaddum}) we assume all graphs to be connected. However, Definition~\ref{Def: main definition} also applies to disconnected graphs; if $x$ belongs to a component $C$ of a disconnected graph $G$, then any vertex $y$ from another component $D$ of $G$ can be included in an $x$-position set, as there is no $x,y$-path in $G$. Hence in this case $p_x(G) = (n-|C|)+p_x(C)$. 

For any vertex $x \in V(G)$ the set $\{ x\}$ is an $x$-position set; however, by the convention in Definition~\ref{Def: main definition}, if $G$ is connected, then $x$ is not contained in any $x$-position set of order $\geq 2$. Hence for any connected graph $G$ with order $n \geq 2$ we have $1 \leq p_x(G) \leq n-1$ for any $x \in V(G)$ and, more generally, a (not necessarily connected) graph $G$ has $\vp (G) = n$ if and only if $G$ has an isolated vertex. These bounds are sharp: for any path $P_n$ of length $\geq 1$ we have $p_x(P_n) = 1$ for either terminal vertex, whilst for $n \geq 2$ we have $p_x(K_n)=n-1$ for every vertex of a complete graph $K_n$.  In this section we derive several bounds for the vertex position numbers in terms of various graph parameters. First we compare the vertex position number with the general position number.
 
\begin{lemma}\label{lower bound vp vs gp}
The vertex position number and general position number of a graph are related by $\vp(G) \geq \gp(G)-1$.
\end{lemma}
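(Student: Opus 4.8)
My plan is to extract an $x$-position set directly from a maximum general position set by deleting one of its vertices and using that vertex as the base point $x$. Concretely, let $S$ be a general position set of $G$ with $|S| = \gp(G)$; since the statement is vacuous when $\gp(G) \le 1$ (as $\vp(G) \ge 1$ always holds for a nonempty graph), I may assume $S$ is nonempty and pick an arbitrary vertex $x \in S$. I will then set $S_x = S \setminus \{x\}$ and argue that $S_x$ is an $x$-position set.

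The verification is just a matter of matching definitions. Fix any $y \in S_x$; I need to show that no vertex of $S_x \setminus \{y\}$ lies on any $x,y$-geodesic. But $S_x \setminus \{y\} = S \setminus \{x,y\}$, and both $x$ and $y$ belong to the general position set $S$, so by the definition of a general position set no vertex of $S \setminus \{x,y\}$ lies on any $x,y$-geodesic of $G$. Hence $S_x$ is indeed an $x$-position set, and therefore $p_x(G) \ge |S_x| = \gp(G) - 1$.

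Finally, since $\vp(G) = \max\{p_z(G) : z \in V(G)\} \ge p_x(G)$, the chain $\vp(G) \ge p_x(G) \ge \gp(G) - 1$ gives the claim. I do not anticipate any real obstacle here: the only things to be slightly careful about are the trivial case $\gp(G) \le 1$ and the observation that the general position property, which is symmetric in the two endpoints, specialises exactly to the $x$-position property once one endpoint is frozen as the base vertex. It may be worth remarking (as a sanity check rather than part of the proof) that the bound is tight, e.g. for complete graphs where $\gp(K_n) = n$ and $\vp(K_n) = n-1$.
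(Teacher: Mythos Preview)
Your proof is correct and follows essentially the same approach as the paper: pick a maximum general position set $S$, choose $x \in S$, and observe that $S\setminus\{x\}$ is an $x$-position set, giving $\vp(G) \ge p_x(G) \ge \gp(G)-1$. Your version simply spells out the definitional check and the trivial case more explicitly.
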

\begin{proof}
Let $S$ be a gp-set of $G$ with $|S| = \gp (G)$. Choose a vertex $x \in S$. Then $S\setminus \{ x\} $ is an $x$-position set, implying that $\vp (G) \geq p_x(G) \geq \gp(G) -1$. 
\end{proof}
The bound in Lemma~\ref{lower bound vp vs gp} is met by the complete graph $K_n$. However, we now give an example to show that the numbers $\vp ^-(G)$, $\gp (G)$ and $\vp (G)$ can be arbitrarily far apart. For $r \geq 2$ we define the vertex set of the graph $G(r)$ to be $\{ u_{i,j}: 1 \leq i \leq 7, 1 \leq j \leq r \} \cup \{ x\} $. Let $H(r)$ be the graph on the same vertex set as $G(r)$ with adjacencies defined as follows: 
\begin{itemize}
    \item $x \sim u_{1,j}$ for $1 \leq j \leq r$,
    \item $u_{i,j} \sim u_{i,j'}$, $1 \leq i \leq 7$, $1 \leq j,j' \leq r$ and $j \not = j'$, and
    \item $u_{i,j} \sim u_{i+1,j'}$ for $1 \leq i \leq 6$ and $1 \leq j,j' \leq r$.
\end{itemize}
Now define $G(r)$ to be the graph formed by deleting all edges to the vertices $u_{3,1}$, $u_{4,1}$ and $u_{5,1}$ except for the path $u_{2,1},u_{3,1},u_{4,1},u_{5,1},u_{6,1}$. See Figure~\ref{fig:G(r)} for an example. The minimum vertex position number of this graph is $r$ (attained at the vertex $x$) and the vertex position number is $6r-4$ (attained at the vertex $u_{4,1}$).

\begin{figure}
			\centering
			\begin{tikzpicture}[x=0.4mm,y=-0.4mm,inner sep=0.2mm,scale=1.0,very thick,vertex/.style={circle,draw,minimum size=7,fill=white}]
				
				\node at (-200,0) [vertex,color=green] (x) {};
				
				\node at (-150,75) [vertex,color=red] (u11) {};
				\node at (-150,25) [vertex,color=red] (u12) {};
				\node at (-150,-25) [vertex,color=red] (u13) {};
				\node at (-150,-75) [vertex,color=red] (u14) {};
				
				\node at (-100,75) [vertex] (u21) {};
				\node at (-100,25) [vertex,color=red] (u22) {};
				\node at (-100,-25) [vertex,color=red] (u23) {};
				\node at (-100,-75) [vertex,color=red] (u24) {};
				
				\node at (-50,75) [vertex] (u31) {};
				\node at (-50,25) [vertex,color=red] (u32) {};
				\node at (-50,-25) [vertex,color=red] (u33) {};
				\node at (-50,-75) [vertex,color=red] (u34) {};
				
				\node at (0,75) [vertex,color=blue] (u41) {};
				\node at (0,25) [vertex] (u42) {};
				\node at (0,-25) [vertex] (u43) {};
				\node at (0,-75) [vertex] (u44) {};
				
				\node at (50,75) [vertex] (u51) {};
				\node at (50,25) [vertex,color=red] (u52) {};
				\node at (50,-25) [vertex,color=red] (u53) {};
				\node at (50,-75) [vertex,color=red] (u54) {};
			
			    \node at (100,75) [vertex] (u61) {};
				\node at (100,25) [vertex,color=red] (u62) {};
				\node at (100,-25) [vertex,color=red] (u63) {};
				\node at (100,-75) [vertex,color=red] (u64) {};
				
				\node at (150,75) [vertex,color=red] (u71) {};
				\node at (150,25) [vertex,color=red] (u72) {};
				\node at (150,-25) [vertex,color=red] (u73) {};
				\node at (150,-75) [vertex,color=red] (u74) {};

				\path
				
				(x) edge (u11)
				(x) edge (u12)
				(x) edge (u13)
				(x) edge (u14)
				
				(u11) edge (u12)
				(u11) edge[bend left] (u13)
				(u11) edge[bend left] (u14)
				(u12) edge (u13)
				(u12) edge[bend left] (u14)
				(u13) edge (u14)
				
				(u21) edge (u22)
				(u21) edge[bend left] (u23)
				(u21) edge[bend left] (u24)
				(u22) edge (u23)
				(u22) edge[bend left] (u24)
				(u23) edge (u24)

				(u32) edge (u33)
				(u32) edge[bend left] (u34)
				(u33) edge (u34)

				(u42) edge (u43)
				(u42) edge[bend left] (u44)
				(u43) edge (u44)

				(u52) edge (u53)
				(u52) edge[bend left] (u54)
				(u53) edge (u54)
			
				(u61) edge (u62)
				(u61) edge[bend left] (u63)
				(u61) edge[bend left] (u64)
				(u62) edge (u63)
				(u62) edge[bend left] (u64)
				(u63) edge (u64)
				
				(u71) edge (u72)
				(u71) edge[bend left] (u73)
				(u71) edge[bend left] (u74)
				(u72) edge (u73)
				(u72) edge[bend left] (u74)
				(u73) edge (u74)
				
				(u11) edge (u21)
				(u11) edge (u22)
				(u11) edge (u23)
				(u11) edge (u24)
				
				(u12) edge (u21)
				(u12) edge (u22)
				(u12) edge (u23)
				(u12) edge (u24)
				
				(u13) edge (u21)
				(u13) edge (u22)
				(u13) edge (u23)
				(u13) edge (u24)
				
				(u14) edge (u21)
				(u14) edge (u22)
				(u14) edge (u23)
				(u14) edge (u24)
				
				(u21) edge (u31)
				(u21) edge (u32)
				(u21) edge (u33)
				(u21) edge (u34)

				(u22) edge (u32)
				(u22) edge (u33)
				(u22) edge (u34)

				(u23) edge (u32)
				(u23) edge (u33)
				(u23) edge (u34)

				(u24) edge (u32)
				(u24) edge (u33)
				(u24) edge (u34)

				(u31) edge (u41)
				
				(u32) edge (u42)
				(u32) edge (u43)
				(u32) edge (u44)

				(u33) edge (u42)
				(u33) edge (u43)
				(u33) edge (u44)

				(u34) edge (u42)
				(u34) edge (u43)
				(u34) edge (u44)
				
				(u41) edge (u51)
				
				(u42) edge (u52)
				(u42) edge (u53)
				(u42) edge (u54)

				(u43) edge (u52)
				(u43) edge (u53)
				(u43) edge (u54)

				(u44) edge (u52)
				(u44) edge (u53)
				(u44) edge (u54)

				(u51) edge (u61)

				(u52) edge (u61)
				(u52) edge (u62)
				(u52) edge (u63)
				(u52) edge (u64)
				
				(u53) edge (u61)
				(u53) edge (u62)
				(u53) edge (u63)
				(u53) edge (u64)
				
				(u54) edge (u61)
				(u54) edge (u62)
				(u54) edge (u63)
				(u54) edge (u64)

				(u61) edge (u71)
				(u61) edge (u72)
				(u61) edge (u73)
				(u61) edge (u74)
				
				(u62) edge (u71)
				(u62) edge (u72)
				(u62) edge (u73)
				(u62) edge (u74)
				
				(u63) edge (u71)
				(u63) edge (u72)
				(u63) edge (u73)
				(u63) edge (u74)
				
				(u64) edge (u71)
				(u64) edge (u72)
				(u64) edge (u73)
				(u64) edge (u74)
				;
			\end{tikzpicture}
			\caption{A graph with $\vp ^- = r$ (green vertex) and $\vp = 6r-4$ (blue vertex). A largest $\vp $-set of the blue vertex is shown in red.}
			\label{fig:G(r)}
		\end{figure}
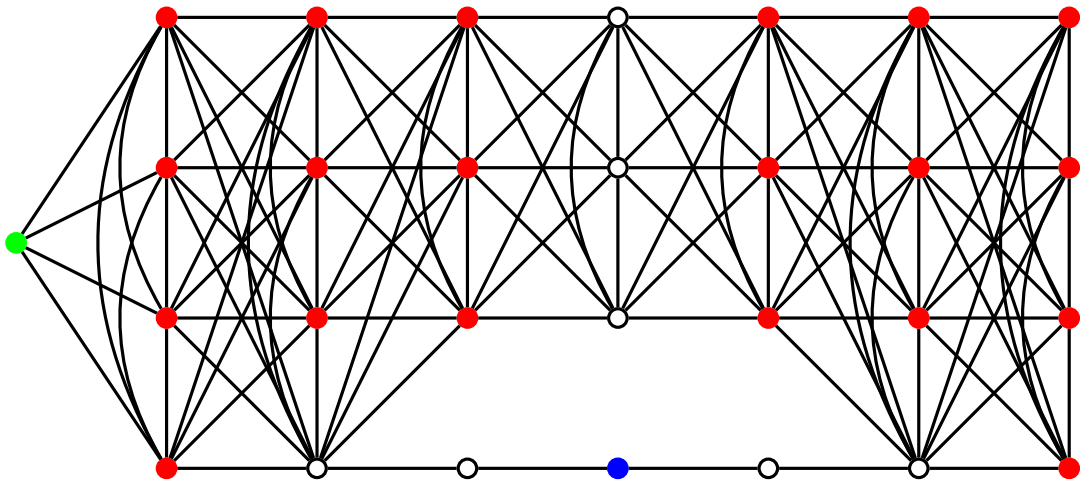

\begin{lemma}
For $r \geq 3$, we have $\vp^-(G(r)) = r, \gp(G(r)) = 2r, \vp(G(r)) = 6r-4$.
\end{lemma}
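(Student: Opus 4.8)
The three equalities will be proved by exhibiting large position/general‑position sets for the lower bounds and by a layer‑by‑layer analysis for the upper bounds. Write $C_i=\{u_{i,j}:1\le j\le r\}$. Throughout I use two elementary facts, valid in any connected graph $G$ and vertex $v$: (i) each distance layer $L_k(v)=\{w:d(v,w)=k\}$ is a $v$‑position set, because the interior vertices of a $v$‑$w$ geodesic with $w\in L_k(v)$ lie in layers $1,\dots,k-1$ and the remaining members of $L_k(v)$ cannot lie on it; hence $p_v(G)\ge\max_k|L_k(v)|\ge\deg_G v$. And (ii) if $y$ belongs to a $v$‑position set $S$, then $S$ avoids $v$ together with the $d(v,y)-1$ interior vertices of one $v$‑$y$ geodesic, so $|S|\le |V(G(r))|-d(v,y)=7r+1-d(v,y)$.

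\emph{The bound $\vp^-(G(r))=r$.} Every vertex of $G(r)$ has degree at least $r$ except $u_{3,1},u_{4,1},u_{5,1}$, for which one checks $|L_2(u_{3,1})|=|L_2(u_{5,1})|=3r-1$ and $|L_3(u_{4,1})|=6r-4$; by (i), $p_v(G(r))\ge r$ for every $v$, so $\vp^-\ge r$. For the reverse inequality I show $p_x(G(r))\le r$ for the green vertex $x$, whose distance layers are exactly $L_i(x)=C_i$ for $1\le i\le 7$. Let $S$ be an $x$‑position set with $|S|\ge 2$. If $S$ meets $C_1$, then since every vertex of $C_1$ lies on an $x$‑geodesic to each vertex at distance $\ge 2$, we get $S\subseteq C_1$; symmetrically, the $x$‑geodesics to a vertex of $C_7$ (one family through the bulk of the columns, one through the path $u_{2,1}u_{3,1}u_{4,1}u_{5,1}u_{6,1}$) together cover $C_1\cup\dots\cup C_6$, so meeting $C_7$ forces $S\subseteq C_7$. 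Otherwise $S\subseteq C_2\cup\dots\cup C_6$; if $S$ contains a ``bulk'' vertex $u_{2,b}$ with $b\ge 2$, that vertex lies on an $x$‑geodesic to every bulk vertex of $C_3,\dots,C_6$ and to $u_{6,1}$, forcing $S\subseteq C_2\cup\{u_{3,1},u_{4,1},u_{5,1}\}$, and since the three path vertices pairwise obstruct one another and each obstructs $u_{2,1}$, we get $|S|\le r$ (and the bulk‑$C_6$ case is symmetric). In the remaining case $S\subseteq\{u_{2,1},u_{6,1}\}\cup C_3\cup C_4\cup C_5$; here $u_{2,1}$ (resp.\ $u_{6,1}$) obstructs everything else in this set, bulk vertices of two distinct columns among $C_3,C_4,C_5$ obstruct each other, and the path vertices are mutually obstructing, so once more $|S|\le r$. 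Hence $\vp^-(G(r))=r$.

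\emph{The bound $\gp(G(r))=2r$.} Any geodesic between $C_1$ and $C_7$ has length $6$ and passes through each $C_i$ ($2\le i\le 6$), so $C_1\cup C_7$ is a general position set and $\gp\ge 2r$. For the upper bound, observe that $u_{3,1},u_{4,1},u_{5,1}$, with $u_{2,1}$ and $u_{6,1}$, induce the path $u_{2,1}u_{3,1}u_{4,1}u_{5,1}u_{6,1}$, whose length $4$ equals $d(u_{2,1},u_{6,1})$; hence $G':=G(r)-\{u_{3,1},u_{4,1},u_{5,1}\}$ is an isometric subgraph of $G(r)$, and $G'$ is precisely the blow‑up of $P_8$ with class sizes $(1,r,r,r-1,r-1,r-1,r,r)$. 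A general position set of a blow‑up of a path meets at most two classes (a third class would lie strictly between two others, hence entirely on geodesics joining them), so $\gp(G')=2r$. If $S$ is a general position set of $G(r)$, then $S\cap V(G')$ is a general position set of $G'$, so $|S|\le 2r+3$; a case analysis on $S\cap\{u_{3,1},u_{4,1},u_{5,1}\}$ removes the slack. For example, if $u_{4,1}\in S$ then $u_{3,1},u_{5,1}\notin S$ (they would be collinear with $u_{4,1}$), and if moreover $|S\cap V(G')|=2r$ then $S\cap V(G')$ is one of the sets $C_a\cup C_b$ with $\{a,b\}\subseteq\{1,2,6,7\}$; in each of the six cases some vertex of $C_a\cup C_b$ lies on a $G(r)$‑geodesic through $u_{4,1}$ (using that $u_{4,1}$ lies on the geodesic $u_{1,a}u_{2,1}u_{3,1}u_{4,1}u_{5,1}u_{6,1}u_{7,g}$, or that the $u_{4,1}$‑$u_{1,a}$ geodesic runs through $u_{2,1}$, etc.), a contradiction; so $|S\cap V(G')|\le 2r-1$ and $|S|\le 2r$. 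The cases where $S$ contains $u_{3,1}$ or $u_{5,1}$ but not $u_{4,1}$ are handled the same way, using that $u_{3,1},u_{4,1},u_{5,1}$ all lie on the displayed $C_1$‑$C_7$ geodesic. Hence $\gp(G(r))=2r$.

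\emph{The bound $\vp(G(r))=6r-4$.} The layer $L_3(u_{4,1})=C_1\cup(C_2\setminus\{u_{2,1}\})\cup(C_3\setminus\{u_{3,1}\})\cup(C_5\setminus\{u_{5,1}\})\cup(C_6\setminus\{u_{6,1}\})\cup C_7$ has $6r-4$ vertices, so $p_{u_{4,1}}(G(r))\ge 6r-4$ by (i). For the matching upper bound at $u_{4,1}$, the layers from $u_{4,1}$ are $L_1=\{u_{3,1},u_{5,1}\}$, $L_2=\{u_{2,1},u_{6,1}\}$, $L_3$ of size $6r-4$, and $L_4=\{x\}\cup\{u_{4,j}:j\ge 2\}$: if an $u_{4,1}$‑position set $S$ meets $L_4$, then (since every vertex of $L_4$ is preceded on all of its $u_{4,1}$‑geodesics by all of $C_3\cup C_5$, or by all of $C_1$) fact (ii) bookkeeping yields $|S|\le 6r-4$ with strict inequality; and if $S$ meets $L_1\cup L_2=\{u_{2,1},u_{3,1},u_{5,1},u_{6,1}\}$, then the chosen vertex obstructs at least $3r-2$ vertices of $L_3$ (and, if it is $u_{3,1},u_{5,1}$ or $u_{6,1}$, also all of $\{u_{4,j}:j\ge 2\}$), again giving $|S|<6r-4$. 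Hence the maximum is attained only by $S=L_3$, so $p_{u_{4,1}}(G(r))=6r-4$. For every other vertex $v$ a direct computation of the layers gives $\max_k|L_k(v)|\le 3r-1$, and the same layer‑obstruction bookkeeping shows that no $v$‑position set exceeds $5r-3<6r-4$; therefore $\vp(G(r))=6r-4$. I expect the delicate steps to be the case analysis bounding $p_x$ at the green vertex (for $\vp^-$) and the removal of the ``$+3$'' slack after the reduction to $G'$ (for $\gp$); the $\vp$ upper bound for vertices $\ne u_{4,1}$ is lengthier but mechanical once the distance layers are written out.
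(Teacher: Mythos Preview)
The paper states this lemma without proof, so there is no argument to compare against; your outline is the only proof on the table. The overall strategy is sound: the layer sets give all three lower bounds, the layer analysis from $x$ gives $\vp^-\le r$, the isometric reduction to the $P_8$ blow-up $G'$ gives $\gp\le 2r+3$, and the layer bookkeeping from $u_{4,1}$ gives $\vp\le 6r-4$. Two places deserve more care than you currently give them.

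For the $\gp$ upper bound, your ``same way'' clause does not cover the subcase $u_{3,1},u_{5,1}\in S$, $u_{4,1}\notin S$. Your argument only shows $|S\cap V(G')|\le 2r-1$ here, yielding $|S|\le 2r+1$. The fix is easy but should be written: since $u_{1,a},u_{3,1},u_{5,1}$ and $u_{3,1},u_{5,1},u_{7,a}$ are both collinear (distances $2+2=4$ along the special path), $S$ meets neither $C_1$ nor $C_7$; similarly $u_{2,1},u_{6,1}\notin S$, and one checks $C_2',C_3',C_5',C_6'$ are all excluded. Hence $|S\cap V(G')|\le r-1$ and $|S|\le r+1<2r$.

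For the $\vp$ upper bound at vertices $v\ne u_{4,1}$, the sentence ``the same layer-obstruction bookkeeping shows that no $v$-position set exceeds $5r-3$'' is a claim, not an argument. It is not enough to bound $\max_k|L_k(v)|$, since (as your own analysis of $u_{4,1}$ shows) a position set can combine vertices from several layers. For $u_{3,1}$, for instance, the layers have sizes $2,\,3r-1,\,r+1,\,3r-2$, and one must actually check the cross-layer obstructions (e.g.\ $C_3'$ obstructs $C_5'$ via $u_{3,1},u_{2,1},u_{3,j},u_{4,k},u_{5,\ell}$) to cap $p_{u_{3,1}}$ at $5r-3$. This is indeed mechanical, but the proof is not complete until at least one representative of each orbit of vertices under the obvious symmetry is handled explicitly.
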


This raises the question of how far apart the numbers $\vp ^-(G)$ and $\vp (G)$ can be.

\begin{problem}
Is the ratio $\frac{\vp (G)}{\vp^- (G)} $ bounded for connected graphs?
\end{problem}

We now bound the vertex position numbers in terms of the vertex degrees.

\begin{lemma}\label{lem:degree bound}
Let $G$ have minimum degree $\delta $ and maximum degree $\Delta $. Then $\vp^-(G) \geq \delta $ and $\vp (G) \geq \Delta $.
\end{lemma}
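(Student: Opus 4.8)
The plan is to exhibit an explicit $x$-position set of the required size in each case, using the neighbourhood of a well-chosen vertex. For the bound $\vp^-(G) \geq \delta$, I would take an \emph{arbitrary} vertex $x$ and argue $p_x(G) \geq \delta$; since this holds for every $x$, it gives $\vp^-(G) = \min_x p_x(G) \geq \delta$. For the bound $\vp(G) \geq \Delta$, it suffices to find a single vertex $x$ with $p_x(G) \geq \Delta$, and the natural choice is a vertex $x$ adjacent to a vertex $v$ of degree $\Delta$.

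The key observation is that the neighbourhood $N(v)$ of a vertex $v$ is always an $x$-position set whenever $x$ is a neighbour of $v$ (or $x = v$ itself). First consider $x = v$: every $y \in N(v)$ satisfies $d(v,y) = 1$, so the only $v,y$-geodesic is the edge $vy$, which contains no other vertex of $N(v)$; hence $N(v)$ is a $v$-position set and $p_v(G) \geq |N(v)| = \deg(v)$. Applying this to a vertex $v$ with $\deg(v) = \delta$ shows $p_v(G) \geq \delta$; but we need this for \emph{every} vertex to bound $\vp^-$. So instead, for an arbitrary $x$, I would use the set $N(x)$ itself: for each neighbour $y$ of $x$ the edge $xy$ is the unique $x,y$-geodesic and contains no other neighbour of $x$, so $N(x)$ is an $x$-position set, giving $p_x(G) \geq \deg(x) \geq \delta$ for all $x$, hence $\vp^-(G) \geq \delta$.

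For $\vp(G) \geq \Delta$, pick $v$ with $\deg(v) = \Delta$ and let $x$ be any neighbour of $v$. Set $S = (N(v) \setminus \{x\}) \cup \{v\}$, which has cardinality $\Delta$. I claim $S$ is an $x$-position set. Each vertex $y \in N(v) \setminus \{x\}$ has $d(x,y) \leq 2$ via the path $x,v,y$; if $d(x,y) = 1$ then the geodesic is the edge $xy$ and contains no vertex of $S \setminus \{y\}$, while if $d(x,y) = 2$ then every $x,y$-geodesic has the form $x,w,y$ for some common neighbour $w$ of $x$ and $y$ — and the only member of $S$ that could be such an internal vertex is $v$, which is \emph{allowed} to lie on the $x,y$-geodesic only if $v \notin S \setminus \{y\}$; since $v \in S$, I must instead argue more carefully. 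This is the main obstacle: I need the internal vertex $v$ of the path $x,v,y$ not to violate the position condition for the \emph{pair} $(x,y)$, which it does since $v \in S$. The fix is to not include $v$: take simply $S = N(v)$, which has size $\Delta$ and contains $x$; but $x$ cannot lie in its own position set. The clean resolution is $S = (N(v) \cup \{v\}) \setminus \{x\}$, still of size $\Delta$: now $v \in S$ plays the role of an ordinary element, $d(x,v) = 1$ so the edge $xv$ is its unique geodesic, and for $y \in N(v)\setminus\{x\}$ any $x,y$-geodesic of length $2$ passes through a common neighbour $w$ of $x,y$; I must check $w \notin S$. Here the argument genuinely needs care, and the honest approach is: if some $x,y$-geodesic passes through a vertex $z \in S \setminus \{y\}$, then $z \in N(v)$, and one shows $d(x,z) + d(z,y) = d(x,y) \leq 2$ forces $z = x$ (excluded) or a contradiction with $z \in N(v)\setminus\{x\}$ unless $y$ and $z$ are both adjacent to $x$, in which case $d(x,y)=1$ and $z$ cannot be internal. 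I would write this out by a short case analysis on $d(x,y) \in \{1,2\}$, which dispatches the obstacle cleanly, and conclude $p_x(G) \geq |S| = \Delta$, hence $\vp(G) \geq \Delta$.
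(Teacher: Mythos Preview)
Your argument for $\vp^-(G)\geq\delta$ is correct and is exactly the paper's proof: for every vertex $x$ the neighbourhood $N(x)$ is an $x$-position set, so $p_x(G)\geq d(x)\geq\delta$.

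The gap is in your treatment of $\vp(G)\geq\Delta$. You already have everything you need: the inequality $p_x(G)\geq d(x)$, applied to a vertex $x$ of degree $\Delta$, gives $\vp(G)\geq p_x(G)\geq\Delta$ immediately. This is what the paper does. Instead you choose $x$ to be a \emph{neighbour} of a maximum-degree vertex $v$ and try to show that $S=(N(v)\cup\{v\})\setminus\{x\}$ is an $x$-position set. It is not, in general. Take $G=P_3$ with central vertex $v$ and endpoints $x,y$: then $S=\{v,y\}$, but the unique $x,y$-geodesic is $x,v,y$, which passes through $v\in S\setminus\{y\}$. Your case analysis breaks precisely here: you write ``if some $x,y$-geodesic passes through a vertex $z\in S\setminus\{y\}$, then $z\in N(v)$'', but $z=v$ is also in $S$, and $v$ is always a common neighbour of $x$ and any $y\in N(v)\setminus\{x\}$ with $d(x,y)=2$. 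The subsequent claim that $z$ adjacent to $x$ forces $d(x,y)=1$ is simply false. So drop this detour and apply your own inequality $p_x(G)\geq d(x)$ at a maximum-degree vertex.
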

\begin{proof}
It follows from Definition~\ref{Def: main definition} that for any vertex $x$ of $G$ the neighbourhood $N(x)$ is an $x$-position set of $G$. Therefore for all vertices $x \in V(G)$ we have $p_x(G) \geq d(x)$. This implies that all $p_x$-sets have order at least $\delta $ and there exists a $p_x$-set with order at least $\Delta $.
\end{proof}

We now generalise this result to sets of vertices at given distance from a fixed vertex; this leads to bounds on $p_x(G)$ in terms of the order $n$ of $G$ and the eccentricity $\e(x)$ of the vertex $x$. 

\begin{lemma}\label{eccentricity bound}
	For any vertex $x$ of $G$ with eccentricity $\e(x)$ the vertex position number satisfies $p_x(G) \geq \frac{n-1}{\e(x)}$. Thus $\vp^-(G) \geq \frac{n-1}{\diam(G)}$ and $\vp(G) \geq \frac{n-1}{\rad(G)}$.
\end{lemma}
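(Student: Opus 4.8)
The plan is to partition the vertices of $G$ by their distance from $x$ and find a large $x$-position set inside one of the distance classes. For $0 \leq i \leq \e(x)$, let $L_i = \{ v \in V(G) : d(x,v) = i \}$ be the $i$-th distance layer of $x$. I claim that each layer $L_i$ is itself an $x$-position set: if $y, z \in L_i$ with $y \neq z$, then $z$ cannot lie on any $x,y$-geodesic, since every vertex on such a geodesic other than $y$ is at distance strictly less than $i = d(x,y)$ from $x$, whereas $d(x,z) = i$. Hence $p_x(G) \geq |L_i|$ for every $i$.

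Next I would count. The layers $L_0, L_1, \dots, L_{\e(x)}$ partition $V(G)$, and $L_0 = \{x\}$, so $\sum_{i=1}^{\e(x)} |L_i| = n - 1$. By averaging, at least one of the $\e(x)$ layers $L_1, \dots, L_{\e(x)}$ has size at least $\frac{n-1}{\e(x)}$, and since this layer is an $x$-position set we conclude $p_x(G) \geq \frac{n-1}{\e(x)}$. Taking the minimum over all $x$ gives $\vp^-(G) = \min_x p_x(G) \geq \min_x \frac{n-1}{\e(x)} = \frac{n-1}{\max_x \e(x)} = \frac{n-1}{\diam(G)}$, and taking the maximum over all $x$ gives $\vp(G) = \max_x p_x(G) \geq \max_x \frac{n-1}{\e(x)} = \frac{n-1}{\min_x \e(x)} = \frac{n-1}{\rad(G)}$.

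There is no real obstacle here; the only thing to be slightly careful about is the direction of the optimisation in the two consequences. For $\vp^-$ we need the bound $p_x(G) \geq \frac{n-1}{\e(x)}$ to hold for \emph{every} $x$, and then replace $\e(x)$ by the largest possible eccentricity, namely $\diam(G)$, which only weakens the right-hand side — this is legitimate since $\vp^-(G)$ is a minimum over $x$. For $\vp$ we instead use that \emph{some} vertex, for instance a central vertex $x$ with $\e(x) = \rad(G)$, already satisfies $p_x(G) \geq \frac{n-1}{\rad(G)}$, and $\vp(G) \geq p_x(G)$. This completes the argument.
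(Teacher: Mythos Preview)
Your proof is correct and follows essentially the same approach as the paper: partition $V(G)\setminus\{x\}$ into distance layers, observe each layer is an $x$-position set, and apply pigeonhole. If anything, you are more explicit than the paper about why a layer is an $x$-position set and about the direction of the optimisation in deducing the $\vp^-$ and $\vp$ bounds.
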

\begin{proof}
	For $1 \leq t \leq \e(x)$, let $V_t$ be the set of vertices at distance exactly $t$ from $x$ in $G$. Each of the sets $V_t$ is an $x$-position set and one of them must have order at least $\frac{n-1}{\e(x)}$, so that $p_x(G) \geq \frac{n-1}{\e(x)}$. Since $\rad(G) \leq \e(x)\leq \diam(G)$ for all $x \in V(G)$ the result follows. 
\end{proof}

This bound immediately characterises the graphs with vertex position number one.

\begin{corollary}\label{cor:path}
A graph $G$ satisfies $\vp^-(G) = 1$ if and only if $G$ is a path. The only connected graphs with $\vp (G) = 1$ are $K_1$ and $K_2$.
\end{corollary}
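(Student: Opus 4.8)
The statement has two parts: first that $\vp^-(G) = 1$ iff $G$ is a path, and second that the only connected graphs with $\vp(G) = 1$ are $K_1$ and $K_2$. I would derive both from Lemma~\ref{eccentricity bound} together with the elementary bounds established just before Lemma~\ref{lower bound vp vs gp}.

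For the first part, the easy direction is that a path $P_n$ has $\vp^-(P_n) = 1$: picking $x$ to be a terminal vertex, any two other vertices lie on a common geodesic from $x$ (in fact the whole path is the unique $x$-geodesic), so every $x$-position set has size $1$, hence $p_x(P_n) = 1$ and so $\vp^-(P_n) \le 1$; since $p_x \ge 1$ always, equality holds. (Here I should note the degenerate cases $P_1 = K_1$ and $P_2 = K_2$ are covered too, and treat $P_1$ separately if the convention $\vp^-(K_1)$ needs comment.) For the converse, suppose $\vp^-(G) = 1$, so $p_x(G) = 1$ for some vertex $x$. By Lemma~\ref{lem:degree bound}, $\vp^-(G) \ge \delta(G)$, so $\delta(G) \le 1$; as $G$ is connected with $\ge 2$ vertices we may assume $\delta(G) = 1$ (the case $n=1$ being trivial). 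More usefully, apply Lemma~\ref{eccentricity bound}: $1 = p_x(G) \ge \frac{n-1}{\e(x)}$ forces $\e(x) \ge n-1$, hence $\e(x) = n-1$ (the eccentricity cannot exceed $n-1$ in a connected graph of order $n$). A vertex of eccentricity $n-1$ forces $G$ to be a path: indeed, if $d(x,v) = n-1$ then a shortest $x,v$-path has $n-1$ edges and therefore $n$ vertices, so it uses every vertex of $G$, i.e. $G$ contains a Hamiltonian path $x = w_0, w_1, \dots, w_{n-1} = v$; now for each $i$ the prefix $w_0 \dots w_i$ must be a shortest $x,w_i$-path, so no vertex $w_j$ with $j \ne i$ lies on an $x,w_i$-geodesic, which forces $d(x, w_i) = i$ for all $i$, and any chord $w_iw_j$ with $j \ge i+2$ would give $d(x,w_j) \le i+1 < j$, a contradiction; hence $G$ has no chords and $G = P_n$.

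For the second part, suppose $G$ is connected with $\vp(G) = 1$, so $p_x(G) = 1$ for \emph{every} vertex $x$. By Lemma~\ref{eccentricity bound}, $\vp(G) \ge \frac{n-1}{\rad(G)}$, giving $\rad(G) \ge n-1$; combined with $\rad(G) \le \diam(G) \le n-1$ this forces $\rad(G) = \diam(G) = n - 1$. But for $n \ge 3$ a path $P_n$ — the only graph we could be dealing with, since $\vp(G) = 1$ implies $\vp^-(G) = 1$ and hence $G$ is a path by the first part — has $\rad(P_n) = \lceil (n-1)/2 \rceil < n-1$, so by the first part $G$ is a path, and then taking $x$ to be a central vertex shows $p_x(G) \ge 2$ whenever $n \ge 3$ (the two neighbours of a central vertex, or more simply the degree bound $p_x \ge \delta = 2$ for an internal vertex). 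Hence $n \le 2$, leaving only $K_1$ and $K_2$, both of which indeed have vertex position number $1$.

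The only genuinely substantive step is showing that a vertex of eccentricity $n-1$ forces the graph to be a path; everything else is bookkeeping with the displayed bounds. I would make sure to handle the trivial small cases ($K_1$, $K_2$) explicitly so the ``$\vp^-(G)=1$'' characterisation reads cleanly, since a path of length $0$ or $1$ is a slightly degenerate instance.
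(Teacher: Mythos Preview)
Your argument is correct and, for the first claim, follows the paper's approach: both invoke Lemma~\ref{eccentricity bound} to force $\diam(G)=n-1$ (equivalently $\e(x)=n-1$ for some $x$), and the paper simply asserts this implies $G$ is a path, whereas you spell out the Hamiltonian-path-without-chords argument explicitly. For the second claim your route is more circuitous than necessary: the paper directly applies Lemma~\ref{lem:degree bound} in the form $\vp(G)\ge\Delta$, so $\vp(G)=1$ gives $\Delta\le 1$ and hence $G\in\{K_1,K_2\}$ immediately---no need to pass through the radius bound or the characterisation of paths first.
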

\begin{proof}
If $x$ is the terminal vertex of a path $P_n$, then $p_x(P_n) = 1$. Conversely, if $\vp^-(G) = 1$, then by Lemma~\ref{eccentricity bound} we must have $\diam (G) = n-1$, which implies that $G$ is a path. By Lemma~\ref{lem:degree bound} any graph $G$ with $\vp (G) = 1$ must have maximum degree $\Delta \leq 1$, which proves the latter statement. \end{proof}

The argument of Lemma~\ref{eccentricity bound} also easily yields the vertex position number of the join of two graphs.
\begin{corollary}
If $G_i$ has order $n_i$ and maximum degree $\Delta _i$ for $i = 1,2$, then the vertex position number of the join is \[ \vp (G\vee H)= \max \{ n_1+\Delta _2,n_2+\Delta _1\} .\]
\end{corollary}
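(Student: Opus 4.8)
\emph{Proof proposal.} Write $G = G_1 \vee G_2$ and $n = n_1 + n_2$. The plan is to get the lower bound for free from Lemma~\ref{lem:degree bound} and to obtain the matching upper bound by exploiting the fact that $\diam(G) \le 2$, combined with one structural observation about the join. First, for the lower bound I would note that a vertex $x \in V(G_i)$ has $\deg_G(x) = \deg_{G_i}(x) + n_{3-i}$, so $\Delta(G) = \max\{\Delta_1 + n_2,\, \Delta_2 + n_1\}$, and hence Lemma~\ref{lem:degree bound} immediately gives $\vp(G) \ge \Delta(G) = \max\{n_1 + \Delta_2,\, n_2 + \Delta_1\}$ (equivalently, take $x$ of maximum degree in $G_i$ and use that $N(x)$ is an $x$-position set, exactly as in the proof of that lemma).

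For the upper bound, fix $x \in V(G)$; by symmetry assume $x \in V(G_1)$, and let $S$ be a $p_x$-set. Since any two vertices lying in the same factor are joined by a path of length $2$ through a vertex of the other factor, $\e(x) \le 2$, and I would split into two cases. If $\e(x) = 1$, then $x$ is a dominating vertex, so $\deg_{G_1}(x) = n_1 - 1 = \Delta_1$ and $p_x = n-1 = n_2 + \Delta_1$, which is at most the claimed maximum. If $\e(x) = 2$, then either every vertex of $S$ is at distance $1$ from $x$, in which case $S \subseteq N(x)$ and $|S| \le \deg_G(x) = \deg_{G_1}(x) + n_2 \le \Delta_1 + n_2$; or $S$ contains some $z$ with $d_G(x,z) = 2$. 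In the latter case, since all of $V(G_2)$ is adjacent to $x$, necessarily $z \in V(G_1)\setminus N_{G_1}[x]$, and then for every $g \in V(G_2)$ the walk $x,g,z$ is a path of length $2$, hence an $x,z$-geodesic; so no such $g$ can lie in $S$, giving $S \cap V(G_2) = \emptyset$ and $|S| \le n_1 - 1 < n_1 + \Delta_2$. In every case $p_x \le \max\{n_1 + \Delta_2,\, n_2 + \Delta_1\}$, and together with the lower bound this proves the claim.

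I expect the only delicate point to be the final sub-case, where $S$ meets the distance-$2$ sphere: the key is to argue that the mere presence of one distance-$2$ vertex in $S$ rules out the \emph{entire} opposite factor, which is precisely where the join structure (every cross pair of vertices being adjacent) is used; everything else is routine bookkeeping with the bound $\diam(G)\le 2$ and the degree formula for the join.
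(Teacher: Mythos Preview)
Your proof is correct and follows essentially the same approach as the paper: both obtain the lower bound from the degree of a vertex in the join (equivalently, $N(x)=V(G_{3-i})\cup N_{G_i}(x)$ is an $x$-position set), and both use the key observation that if $z\in V(G_1)$ with $d(x,z)=2$ then every $g\in V(G_2)$ lies on the geodesic $x,g,z$, so $S$ cannot contain both such a $z$ and any vertex of the opposite factor. The only cosmetic difference is the organization of the case split---you dichotomise on whether $S$ meets the distance-$2$ sphere, while the paper argues (via comparison with $p_y$ for $y\in V(G_2)$) that one may assume $S$ meets $V(G_2)$; these are contrapositives of the same implication.
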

\begin{proof}
The diameter of $G$ is two. For any vertex $x$ in $G_1$ the set $V(G_2) \cup N_{G_1}(x)$ is an $x$-position set by Lemma~\ref{eccentricity bound} with order $n_2+\Delta _1$, with a similar result for vertices $y$ in $G_2$. Suppose without loss of generality that the largest vertex position set is attained at a vertex $x \in V(G_1)$. Then we can assume that a $p_x$-set contains a vertex of $G_2$, for otherwise $\vp(G) \vee H) = p_x(G \vee H) \leq n_1 < p_y(G\vee H)$ for any $y \in V(G_2)$, a contradiction. As a maximum $p_x$-set $S_x$ contains a vertex of $G_2$, $S_x$ cannot contain any vertex of $V(G_1)\setminus (\{ x\} \cup N_{G_1}(x)$, so that $p_x(G\vee H) \leq n_2+\Delta _1$. The result follows. 
\end{proof}

\begin{theorem}
For any connected graph $G$ we have $\vp^-(G) \geq \left \lceil \frac{\Delta +1 }{3} \right \rceil$. If $G$ is bipartite, then $\vp^-(G) \geq \left \lceil \frac{\Delta }{2} \right \rceil $.
\end{theorem}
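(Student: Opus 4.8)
The plan is to prove directly that $p_x(G)\geq\left\lceil\frac{\Delta+1}{3}\right\rceil$ for \emph{every} vertex $x$, since $\vp^-(G)=\min_x p_x(G)$. Fix $x$ and let $v$ be a vertex of degree $\Delta$; the key point is that $v$ need not be $x$, so instead of merely invoking $N(x)$ we work with the closed neighbourhood $N[v]=N(v)\cup\{v\}$, which has $\Delta+1$ vertices. Writing $t=d(x,v)$, every vertex of $N[v]$ lies in $V_{t-1}\cup V_t\cup V_{t+1}$, where $V_s=\{u:d(x,u)=s\}$, because $v$ is at distance $t$ and each of its neighbours is within distance one of $t$. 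As in the proof of Lemma~\ref{eccentricity bound}, for every $s\geq 1$ the layer $V_s$, and hence every subset of it, is an $x$-position set (no vertex at distance $s$ from $x$ can lie on an $x$,$y$-geodesic to another vertex $y$ at distance $s$).

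If $t\geq 2$, then $t-1,t,t+1$ are all at least $1$, so $N[v]$ is partitioned among the three $x$-position sets $V_{t-1}\cap N[v]$, $V_t\cap N[v]$, $V_{t+1}\cap N[v]$; by the pigeonhole principle one of them has at least $\left\lceil\frac{\Delta+1}{3}\right\rceil$ elements, giving $p_x(G)\geq\left\lceil\frac{\Delta+1}{3}\right\rceil$. If $t\leq 1$, then $x\in N[v]$ and $V_0\cap N[v]=\{x\}$, so the remaining $\Delta$ vertices of $N[v]\setminus\{x\}$ lie in $V_1\cup V_2$; pigeonhole now produces an $x$-position set of size at least $\left\lceil\frac{\Delta}{2}\right\rceil$, and the elementary inequality $\left\lceil\frac{\Delta}{2}\right\rceil\geq\left\lceil\frac{\Delta+1}{3}\right\rceil$ (which holds for all $\Delta\geq 1$, since $\frac{\Delta}{2}\geq\frac{\Delta+1}{3}$ once $\Delta\geq 2$) completes this case. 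The degenerate cases $\Delta\in\{0,1\}$, i.e. $G=K_1$ or $K_2$, are trivial.

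For the bipartite refinement, the improvement stems from the fact that in a bipartite graph adjacent vertices lie in opposite parts and $d(x,\cdot)$ has constant parity on each part, so for every edge $uv$ one has $|d(x,u)-d(x,v)|=1$ \emph{exactly}. Consequently $N(v)$ (with $v$ again of degree $\Delta\geq 2$) is spread only over the two layers $V_{t-1}$ and $V_{t+1}$, never the middle layer $V_t$. When $t\geq 2$ both of these layers are $x$-position sets, and pigeonhole gives one of size at least $\left\lceil\frac{\Delta}{2}\right\rceil$; when $t=1$ we have $V_0\cap N(v)=\{x\}$, so $V_2$ contains at least $\Delta-1\geq\left\lceil\frac{\Delta}{2}\right\rceil$ vertices of $N(v)$; and when $t=0$ we have $v=x$ and $N(x)\subseteq V_1$ already has $\Delta$ vertices. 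In every case $p_x(G)\geq\left\lceil\frac{\Delta}{2}\right\rceil$, and since $x$ was arbitrary, $\vp^-(G)\geq\left\lceil\frac{\Delta}{2}\right\rceil$.

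The argument is essentially routine; the only points needing care are the boundary cases where $x$ lies in or adjacent to $N[v]$, which force us to discard the layer $V_0=\{x\}$ — this costs one vertex and is exactly why the general bound weakens from a ``three layers'' count to $\left\lceil\frac{\Delta}{2}\right\rceil$ in that case — together with the verification of the ceiling inequality $\left\lceil\frac{\Delta}{2}\right\rceil\geq\left\lceil\frac{\Delta+1}{3}\right\rceil$.
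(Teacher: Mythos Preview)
Your proof is correct and follows essentially the same approach as the paper's: both arguments observe that the closed neighbourhood of a maximum-degree vertex occupies at most three consecutive levels of the distance partition from any base vertex (two levels for $N(v)$ in the bipartite case), and then apply pigeonhole. The only cosmetic difference is that the paper fixes the maximum-degree vertex as the ``$x$'' and lets the base point $y$ vary, whereas you keep $x$ as the base point and pick a separate maximum-degree vertex $v$; your handling of the boundary case $t\leq 1$ is also a bit more explicit than the paper's.
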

\begin{proof}
Let $G$ be a connected graph with maximum degree $\Delta $ and let $x$ be a vertex of $G$ with this degree. Let $y$ be any vertex of $G$. If $y = x$, we have $p_y(G) \geq \Delta $, so suppose that $y \not = x$ and let $r \geq 0$ be the length of the shortest path from $y$ to $N(x)$. Then it can be seen that the distance from $y$ to any vertex of $\{ x\} \cup N(x)$ is one of $r$, $r+1$ or $r+2$. It follows that one of the level sets of $G$ in the distance partition with respect to $y$ must have order at least $\frac{\Delta +1}{3}$. Hence $p_y(G) \geq \frac{\Delta +1}{3}$ and our proof is complete. If $G$ is bipartite, then $N(x)$ is an independent set and the distance from $y$ to any vertex of $N(x)$ is either $r$ or $r+2$, so we can improve the bound to $\vp^-(G) \geq \frac{\Delta }{2}$ in this case. The constructions in Figures~\ref{fig:lower bound vp^-} show that both of these bounds are tight (in both cases $x$ is a vertex with maximum degree and $p_y(G) = \vp ^-(G)$).
\end{proof}

\begin{figure}
			\centering
			\begin{tikzpicture}[x=0.4mm,y=-0.4mm,inner sep=0.2mm,scale=0.5,very thick,vertex/.style={circle,draw,minimum size=15,fill=white}]

				\node at (-150,50) [vertex,fill=red] (b1) {};
				\node at (-50,50) [vertex,fill=red] (b2) {};
				\node at (50,50) [vertex,fill=red] (b3) {};
				\node at (150,50) [vertex,fill=red] (b4) {};
				
				\node at (-150,100) [vertex] (a1) {};
				\node at (-50,100) [vertex] (a2) {};
				\node at (50,100) [vertex] (a3) {};
				\node at (150,100) [vertex] (a4) {};
				
				\node at (-150,-50) [vertex] (c1) {};
				\node at (-50,-50) [vertex] (c2) {};
				\node at (50,-50) [vertex] (c3) {};
				\node at (150,-50) [vertex] (c4) {};
				
				\node at (0,150) [vertex] (y) {$y$};
				\node at (0,0) [vertex,fill=red] (x) {$x$};
				
					\node at (200,50) [vertex,fill=red] (f1) {};
				\node at (300,50) [vertex,fill=red] (f2) {};
				\node at (400,50) [vertex,fill=red] (f3) {};
				\node at (500,50) [vertex,fill=red] (f4) {};
				
				\node at (200,-50) [vertex] (d1) {};
				\node at (300,-50) [vertex] (d2) {};
				\node at (400,-50) [vertex] (d3) {};
				\node at (500,-50) [vertex] (d4) {};
				
				\node at (350,100) [vertex] (g) {$y$};
				\node at (350,0) [vertex] (h) {$x$};
				
				\path
				
				(x) edge (b1)
				(x) edge (b2)
				(x) edge (b3)
				(x) edge (b4)
				(x) edge (a1)
				(x) edge (a2)
				(x) edge (a3)
				(x) edge (a4)
				(x) edge (c1)
				(x) edge (c2)
				(x) edge (c3)
				(x) edge (c4)
				
				(a1) edge (b1)
				(b1) edge (c1)
				(a2) edge (b2)
				(b2) edge (c2)
				(a3) edge (b3)
				(b3) edge (c3)
				(a4) edge (b4)
				(b4) edge (c4)

				(y) edge (a1)
				(y) edge (a2)
				(y) edge (a3)
				(y) edge (a4)
				
				(a1) edge (b1)
				(a2) edge (b2)
				(a3) edge (b3)
				(a4) edge (b4)
				
				(h) edge (d1)
				(h) edge (d2)
				(h) edge (d3)
				(h) edge (d4)
				(h) edge (f1)
				(h) edge (f2)
				(h) edge (f3)
				(h) edge (f4)
				
				(g) edge (f1)
				(g) edge (f2)
				(g) edge (f3)
				(g) edge (f4)

				;
			\end{tikzpicture}
			\caption{A graph with $\vp^-(G) = \left\lceil \frac{ \Delta+1 }{3}\right\rceil $ (left) and a bipartite graph with $\vp^-(G) = \frac{\Delta }{2}$ (right), with $p_y$-sets in red.}
			\label{fig:lower bound vp^-}
		\end{figure}
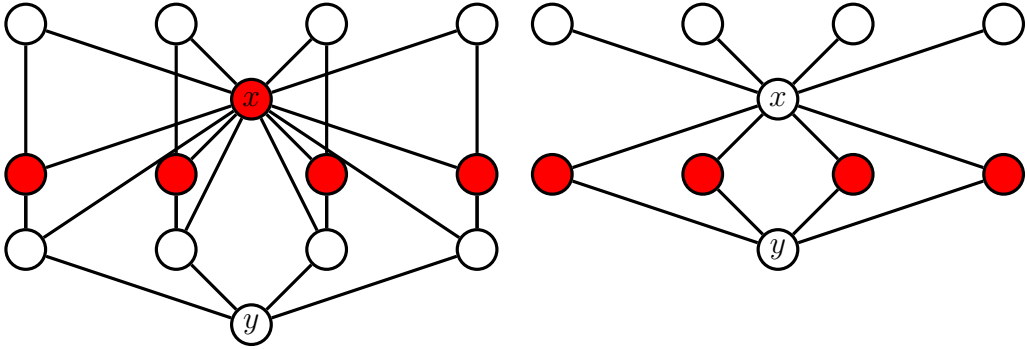
		
Now we give an upper bound for the vertex position number in terms of vertex eccentricity.
\begin{lemma}\label{lem:eccentricity upper bound}
For any vertex $x$ of $G$ with eccentricity $\e(x)$, the $x$-vertex position number of $G$ is bounded above by $p_x(G) \leq n-\e(x)$. Thus $\vp (G) \leq n-\rad (G)$ and $\vp ^-(G) \leq n-\diam (G)$.  
\end{lemma}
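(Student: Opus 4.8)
The plan is to produce, for each vertex $x$, an explicit set of $\e(x)$ vertices that no $x$-position set can contain, and then read off the two global bounds from the extremal characterisations of radius and diameter. Fix a vertex $x$ and choose an eccentric vertex $y$ of $x$, so that $d(x,y)=\e(x)$; fix a shortest $x,y$-path $P\colon x=v_0,v_1,\dots,v_{\e(x)}=y$. This path has exactly $\e(x)+1$ vertices, and every initial subpath $v_0,v_1,\dots,v_j$ is itself a geodesic, since a shorter $x,v_j$-path could be extended by the tail $v_j,\dots,v_{\e(x)}$ to beat $P$.

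The one real observation is that any $x$-position set $S$ meets $V(P)$ in at most one vertex. Indeed, if $v_i,v_j\in S$ with $i<j$, then the subpath $v_0,\dots,v_j$ is an $x,v_j$-geodesic passing through $v_i\in S\setminus\{v_j\}$, contradicting Definition~\ref{Def: main definition}. Consequently $S$ avoids at least $|V(P)|-1=\e(x)$ vertices of $G$, so $|S|\le n-\e(x)$; taking $S$ to be a $p_x$-set gives $p_x(G)\le n-\e(x)$.

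The two consequences are then immediate. Since $\e(x)\ge\rad(G)$ for every vertex $x$, we have $p_x(G)\le n-\e(x)\le n-\rad(G)$ for all $x$, hence $\vp(G)=\max_x p_x(G)\le n-\rad(G)$. On the other hand, choosing $x$ with $\e(x)=\diam(G)$ yields $\vp^-(G)\le p_x(G)\le n-\diam(G)$.

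I do not anticipate a genuine obstacle here; the proof is short. The only points requiring a line of care are: (i) checking that the relevant initial subpath of $P$ really is a shortest $x,v_j$-path, so that the position condition applies to it; and (ii) getting the two quantifier directions right when passing from the pointwise bound $p_x(G)\le n-\e(x)$ to the bounds on $\vp$ and $\vp^-$ — the bound on $\vp$ uses the pointwise inequality for all $x$ together with $\e(x)\ge\rad(G)$, whereas the bound on $\vp^-$ only needs it at a single diametral vertex.
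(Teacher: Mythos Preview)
Your proof is correct and follows essentially the same route as the paper: fix an $x,y$-geodesic to an eccentric vertex $y$, observe that an $x$-position set can contain at most one vertex of this geodesic, and count. Your write-up is a bit more explicit about why initial subpaths are geodesics and about the quantifier logic in deriving the $\vp$ and $\vp^-$ bounds, but the argument is the same.
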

\begin{proof}
Fix a vertex $x$ of $G$ with eccentricity $\e(x)$. Let $y$ be an eccentric vertex of $x$, i.e. $d(x,y) = \e(x)$. Let $x = u_0,u_1,\dots,u_{\e(x)}=y$ be an $x,y$-geodesic in $G$ and $S_x$ be an $x$-position set of $G$ with order $p_x(G)$. Suppose that $u_i,u_j\in S_x$ for some $i,j$ with $0 \leq i< j \leq \e(x)$. But then $u_i$ lies on an $x,u_j$-geodesic, contradicting the fact that $S_x$ is an $x$-position set. Hence any $x$-position set contains at most one vertex from the set $\{ x,u_1,u_2,\dots,u_{\e(x)}\}$. Thus $p_x(G) = |S_x|\leq n-1-\e(x)+1 = n-\e(x)$. Thus $\vp(G) \leq n-\rad(G)$ and $\vp ^-(G) \leq n-\diam (G)$.
\end{proof}

The following theorem improves the upper bound for $\vp(G)$ in Lemma~\ref{lem:eccentricity upper bound}.
\begin{theorem}
For any graph $G$ with $\rad(G) \geq 3$ we have $\vp(G) \leq n-\rad(G)-1$.
\end{theorem}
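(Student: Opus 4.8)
The plan is to pick a vertex $x$ with $p_x(G)=\vp(G)$ and split on its eccentricity. If $\e(x)>\rad(G)$ then Lemma~\ref{lem:eccentricity upper bound} already gives $p_x(G)\le n-\e(x)\le n-\rad(G)-1$, so the whole difficulty lies in the case $\e(x)=r:=\rad(G)\ge 3$, where Lemma~\ref{lem:eccentricity upper bound} only yields $p_x(G)\le n-r$. Here I would suppose for contradiction that $p_x(G)=n-r$ and fix a $p_x$-set $S$. Since $p_x(G)=\vp(G)=n-r\ge 2$ (if it equalled $1$ then Corollary~\ref{cor:path} would force $G\in\{K_1,K_2\}$, impossible as $\rad(G)\ge 3$), the convention in Definition~\ref{Def: main definition} gives $x\notin S$; put $B:=V(G)\setminus S$, so $|B|=r$ and $x\in B$.

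The core step is to show that $B$ is very rigid. For any eccentric vertex $y$ of $x$ and any $x,y$-geodesic $Q$, the $r+1$ vertices of $Q$ occupy the distinct distances $0,1,\dots,r$ from $x$, so $|S\cap Q|\le 1$ forces $B\subseteq Q$. Hence $B$ contains exactly one vertex at each distance in $\{0,1,\dots,r\}\setminus\{m\}$ for a single missing value $m\ge 1$ (distance $0$ is not missing, as $x\in B$). Varying $Q$ then gives two further facts: (i) if $m<r$, the vertex of $B$ at distance $r$ lies on every $x$-geodesic to an eccentric vertex, which forces $x$ to have a unique eccentric vertex; and (ii) after fixing one geodesic $Q=x,q_1,\dots,q_r=y$ to an eccentric vertex, every vertex of $B$ at distance $t$ from $x$ equals $q_t$, so $B\cap V_1\subseteq\{q_1\}$ and $B\cap V_{r-1}\subseteq\{q_{r-1}\}$, where $V_t$ denotes the set of vertices at distance $t$ from $x$.

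To finish, I would exploit the neighbour $w:=q_1$ of $x$. As $\rad(G)=r$ we have $\e(w)\ge r$, so pick $z$ with $d(w,z)=\e(w)\ge r$; combined with $d(x,z)\le\e(x)=r$ this forces $d(x,z)\in\{r-1,r\}$. If $d(x,z)=r$, then $z$ is eccentric for $x$, and either $w\in B$, so $w$ (like all of $B$) lies on every $x$-geodesic to an eccentric vertex and $d(w,z)=r-1$; or $w\notin B$, so $B$ has no vertex at distance $1$, so $m=1$, so by (i) $z=y$ and again $d(w,z)=d(w,y)=r-1$ — either way contradicting $d(w,z)\ge r$. If instead $d(x,z)=r-1$, then $d(w,z)=r$ and $x$ lies on a $w,z$-geodesic, producing an $x,z$-geodesic $x,c_1,\dots,c_{r-1}=z$ with $c_1\ne w$; by (ii), $c_1\in V_1\setminus\{q_1\}$ gives $c_1\in S$, and $z\in V_{r-1}$ with $z\ne q_{r-1}$ (since $d(w,z)=r\ne r-2=d(q_1,q_{r-1})$) gives $z\in S$; as $r\ge 3$ we have $c_1\ne z$, so $c_1$ and $z$ are distinct vertices of $S$ with $c_1$ lying on an $x,z$-geodesic, contradicting that $S$ is an $x$-position set. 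Both cases being impossible, $p_x(G)\le n-r-1$; with the first paragraph this gives $\vp(G)\le n-\rad(G)-1$.

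The step I expect to be the real obstacle is proving the rigidity of $B$ — in particular that every $x$-geodesic to an eccentric vertex swallows all of $B$ — and correctly handling the boundary values $m=1$ (no vertex of $B$ adjacent to $x$) and $m=r$ ($x$ possibly has several eccentric vertices); choosing the neighbour $w$ of $x$ so that both endpoints of the geodesic obtained when $d(x,z)=r-1$ are forced into $S$ is the delicate bookkeeping point on which the argument hinges.
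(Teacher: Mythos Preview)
Your proof is correct. It shares the paper's opening move: both reduce to a central vertex $x$ with $p_x(G)=n-r$, set $B=V(G)\setminus S$ with $|B|=r$, and observe that any $x$-geodesic $Q$ to an eccentric vertex has $r+1$ vertices and at most one in $S$, forcing $B\subseteq Q$.

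The two arguments diverge in how the contradiction is extracted. The paper reasons globally: fixing one such geodesic $P=p_0p_1\cdots p_r$, every vertex off $P$ lies in $S$, so every shortest $x$-path to such a vertex has all its internal vertices on $P$; hence every vertex of $G$ is adjacent to $P$, and then (via a computation the paper leaves implicit) one checks that $\e(p_1)\le r-1$, contradicting $\rad(G)=r$. You instead work locally with $w=q_1$ and an eccentric vertex $z$ of $w$, splitting on $d(x,z)\in\{r-1,r\}$ and on whether $w\in B$; each branch yields either $d(w,z)\le r-1$ or two distinct vertices of $S$ on a common $x$-geodesic. Your structural facts (i) and (ii) are sharper than anything the paper needs---it never invokes uniqueness of the eccentric vertex---and the bookkeeping you flagged as delicate is precisely the price of this route. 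The paper's argument is shorter once the final eccentricity check is filled in; yours is longer but entirely self-contained, and in fact you could short-circuit your case analysis by noting, as the paper does, that every vertex off $Q$ is adjacent to some $q_j$ with $j\le r-1$, whence $\e(q_1)\le r-1$ directly.
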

\begin{proof}
Suppose that $G$ has radius $\rad(G) \geq 3$ and meets the upper bound in Lemma~\ref{lem:eccentricity upper bound}. Then the largest value of the vertex position number is achieved by a central vertex, call it $u$. Let $S_u$ be any $u$-position set of order $n-\rad(G)$. By the argument of Lemma~\ref{lem:eccentricity upper bound} there is a path $P$ from $u$ to one of its eccentric vertices such that $P$ contains just one vertex of $S_u$ and all vertices of $V(G)\setminus V(P)$ belong to $S_u$. As the shortest path from $u$ to any vertex $x \in V(G)\setminus V(P)$ cannot pass through another vertex of $V(G)\setminus V(P)$, the shortest path from $u$ to $x$ consists of a section of $P$ followed by an edge from $P$ to $x$.  Hence each vertex of $V(G) \setminus V(P)$ has an edge to $P$; however, this contradicts our supposition that $u$ is a central vertex.
\end{proof}

A vertex $v$ in a connected graph $G$ is a \emph{boundary vertex} of a vertex $u$ if $d(u,w)\leq d(u,v)$ for each neighbour $w$ of $v$. The set of all boundary vertices of $u$ is denoted by $\partial (u)$.

\begin{proposition}\label{pro:boundary}
For any connected graph $G$ and any vertex $x \in V(G)$, the boundary $\partial (x)$ is an $x$-position set of $G$.
\end{proposition}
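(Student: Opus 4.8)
The plan is to show directly that $\partial(x)$ satisfies the defining property of an $x$-position set: for any $y \in \partial(x)$, no other boundary vertex of $x$ lies on an $x,y$-geodesic. I would argue by contradiction. Suppose $y, z \in \partial(x)$ with $z \neq y$ and $z$ lying on some $x,y$-geodesic $P$. Then $z$ is an internal vertex of $P$ (it cannot equal $x$, since if $x \in \partial(x)$ then $x$ is an isolated vertex and $G$ is connected of order $\geq 2$ only if trivial; in any case $x$ on an $x,y$-geodesic forces $y = x$), so $z$ has a neighbour $w$ on the subpath of $P$ from $z$ to $y$, with $d(x,w) = d(x,z) + 1 > d(x,z)$.

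This immediately contradicts the definition of boundary vertex: $z \in \partial(x)$ requires $d(x,w) \leq d(x,z)$ for every neighbour $w$ of $z$. Hence no such $z$ exists, and $\partial(x)$ is an $x$-position set. The key observation driving the proof is that if $z$ is a strictly interior vertex of any geodesic emanating from $x$, then the next vertex along the geodesic is a neighbour of $z$ strictly farther from $x$, so $z$ cannot be a boundary vertex of $x$. Equivalently, every boundary vertex of $x$ is a "local maximum" of the distance-from-$x$ function, and a geodesic from $x$ cannot pass through such a vertex except at its endpoint.

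I do not expect any serious obstacle here; the statement is essentially a reformulation of the boundary-vertex definition. The only point requiring a little care is the degenerate case where $x$ itself might be counted as a boundary vertex of $x$ — but in a connected graph this happens only when $V(G) = \{x\}$, and for $|V(G)| \geq 2$ the vertex $x$ has a neighbour $w$ with $d(x,w) = 1 > 0 = d(x,x)$, so $x \notin \partial(x)$; thus $x$ never interferes. One should also note that the property is vacuous when $|\partial(x)| \leq 1$, so the interesting content is exactly the pairwise argument above.
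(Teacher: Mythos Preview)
Your proof is correct and follows essentially the same argument as the paper: assume $z,y \in \partial(x)$ with $z$ on an $x,y$-geodesic, then the vertex following $z$ on that geodesic is a neighbour of $z$ at strictly greater distance from $x$, contradicting $z \in \partial(x)$. Your extra care about the degenerate case $x \in \partial(x)$ is not in the paper but is harmless and arguably makes the argument cleaner.
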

\begin{proof}
Assume to the contrary that $\partial (x)$ is not an $x$-position set; hence there must be a geodesic $x = u_0,u_1,\dots,u_i = z,u_{i+1},\dots,u_k = y$ such that $z,y \in \partial (x)$ and $z \not = y$. This shows that $d(x,u_{i+1})> d(x,z)$, contradicting the fact that $z$ is a boundary vertex of $x$. Hence $\partial (x)$ is an $x$-position set.
\end{proof}
It follows from Proposition~\ref{pro:boundary} that for any $x \in V(G)$ the set $\ext (G) \setminus \{ x\} $ is an $x$-position set. The bound in Proposition~\ref{pro:boundary} is tight for the vertex $x$ in the graph in Figure~\ref{fig:boundary}, but this is not true in general. In fact for any $s \geq t$ if we take a vertex $x$ in the partite set of order $t+1$ in the complete bipartite graph $K_{s,t+1}$ then $|\partial (x)| = t$ but $p_x(K_{s,t+1}) = s$.

\begin{figure}
			\centering
			\begin{tikzpicture}[x=0.4mm,y=-0.4mm,inner sep=0.2mm,scale=0.8,very thick,vertex/.style={circle,draw,minimum size=15,fill=white}]
			\node at (0,0) [vertex,color=red] (u1) {};
			\node at (50,0) [vertex] (u2) {};
			\node at (100,0) [vertex,color=red] (u3) {};
			\node at (150,0) [vertex] (u4) {};
			\node at (200,0) [vertex] (u5) {};
			\node at (250,0) [vertex,color=red] (u6) {};
			\node at (100,-50) [vertex] (y) {};
			\node at (100,-100) [vertex] (x) {$x$};
			
			\path
			
		(u1) edge (u2)
		(u2) edge (u3)
		(u3) edge (u4)
		(u4) edge (u5)
		(u5) edge (u6)
		(u2) edge (y)
		(u3) edge (y)
		(u4) edge (y)
		(x) edge (y)
				;
			\end{tikzpicture}
			\caption{The boundary $\partial (x)$ (shown in red) is a $p_x$-set}
			\label{fig:boundary}
		\end{figure}
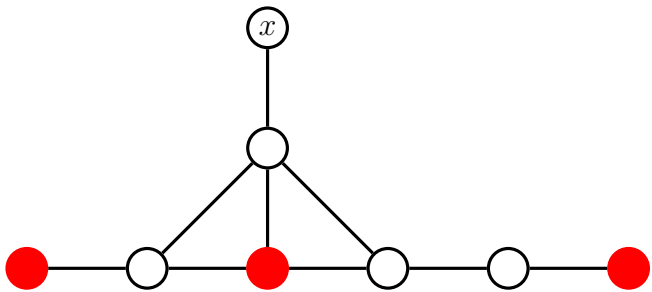

Finally we present a Nordhaus-Gaddum relation for the vertex position number. 
\begin{theorem}\label{thm:Nordhasu Gaddum}
For any graph $G$ we have $n-1\leq \vp (G) +\vp (\overline G) \leq 2n-1$. Both bounds are tight.
\end{theorem}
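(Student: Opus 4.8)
The plan is to prove the two inequalities separately and then to exhibit extremal graphs; I will assume $n\ge 2$ throughout, since for $n=1$ the statement is degenerate.

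For the upper bound I would use the characterisation recorded just before Lemma~\ref{lower bound vp vs gp}: a graph $H$ satisfies $\vp(H)=|V(H)|$ exactly when $H$ has an isolated vertex, and $\vp(H)\le |V(H)|-1$ otherwise. The key point is that for $n\ge 2$ the graphs $G$ and $\overline G$ cannot simultaneously have an isolated vertex, since an isolated vertex of $G$ is a dominating vertex of $\overline G$. Hence at least one of $\vp(G),\vp(\overline G)$ is at most $n-1$ while the other is at most $n$, and adding these gives $\vp(G)+\vp(\overline G)\le 2n-1$. (Equivalently: if $G$ has an isolated vertex then $\overline G$ is connected with $\rad(\overline G)=1$, so Lemma~\ref{lem:eccentricity upper bound} gives $\vp(\overline G)\le n-\rad(\overline G)=n-1$.)

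For the lower bound I would start from Lemma~\ref{lem:degree bound}, which yields $\vp(G)\ge\Delta(G)$ and $\vp(\overline G)\ge\Delta(\overline G)$. Since a vertex of smallest degree in $G$ becomes a vertex of largest degree in $\overline G$, we have $\Delta(\overline G)=n-1-\delta(G)$, and therefore
\[
\vp(G)+\vp(\overline G)\ \ge\ \Delta(G)+\Delta(\overline G)\ =\ \Delta(G)+(n-1)-\delta(G)\ \ge\ n-1,
\]
the final inequality holding because $\Delta(G)\ge\delta(G)$. For sharpness of the upper bound I would take $G=\overline{K_n}$, which has an isolated vertex so $\vp(\overline{K_n})=n$, while $\vp(K_n)=n-1$, so the sum is exactly $2n-1$. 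For the lower bound one needs a graph for which Lemma~\ref{lem:degree bound} is simultaneously tight for $G$ and for $\overline G$; this forces $G$ to be regular with $\vp(G)=\Delta(G)$ and likewise for $\overline G$, and the natural witness is the self-complementary graph $C_5$, for which a short direct argument shows $\vp(C_5)=2$ (along either of the two arcs joining a vertex $x$ to its antipodal pair, an $x$-position set can contain at most one vertex), so that $\vp(C_5)+\vp(\overline{C_5})=4=n-1$.

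I do not expect a genuine obstacle: the upper bound is essentially immediate from the isolated-vertex characterisation, and the lower bound is immediate from Lemma~\ref{lem:degree bound} together with the identity $\Delta(\overline G)=n-1-\delta(G)$. The only slightly delicate step is producing an extremal example for the lower bound, since it requires the maximum-degree bound to be exact for both a graph and its complement; $C_5$ does the job (and one could search among self-complementary graphs for examples at other orders if an infinite family were wanted).
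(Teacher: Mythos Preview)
Your argument is correct and follows essentially the same route as the paper: the upper bound via the isolated-vertex characterisation (noting that $G$ and $\overline G$ cannot both have an isolated vertex), and the lower bound via the degree bound of Lemma~\ref{lem:degree bound} combined with $\Delta(\overline G)=n-1-\delta(G)$. The only notable difference is in the tightness example for the lower bound: you give $C_5$, whereas the paper exhibits the infinite family $C_n$ for $n\ge 5$ by proving directly that $\vp(\overline{C_n})=n-3$; either suffices for the statement as written.
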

\begin{proof}
Notice that one of $G$ and $\overline {G}$ could be disconnected. Let $x$ be any vertex of a graph $G$ with degree $d(x)$. In the complement $\overline G$ the vertex $x$ has degree $d'(x) = n-1-d(x)$. By Lemma~\ref{lem:degree bound} we thus have
\begin{equation}\label{eqn:NordhausGaddum}
   \vp (G) + \vp(\overline G) \geq p_x(G)+p_x(\overline G) \geq d(x) + n-1-d(x) = n-1. 
\end{equation}
To show that equality holds, consider the cycle $C_n$ for $n \geq 5$. If $n = 5$ the result is simple, as $\overline {C_5} \cong C_5$, so take $n \geq 6$. Label the vertices of the cycle $x_0,x_1,\dots ,x_{n-1}$, where $x_0 \sim x_{n-1}$ and $x_i \sim x_{i+1}$ for $0 \leq i \leq n-2$. As will be shown in Corollary~\ref{cor:cycles}, we have $\vp(C_n) = 2$. Consider the vertex $x_0$ (as $\overline C_n$ is vertex-transitive the choice is arbitrary) and let $S$ be a largest $x_0$-position set. The degree of $x_0$ is $d(x) = n-3$, so that by Lemma~\ref{lem:degree bound} we have $\vp (\overline C_n) \geq n-3$. Suppose that a vertex $x_i$, $3 \leq i \leq n-3$, belongs to $S$; then as $x_0,x_i,x_1$ and $x_0,x_i,x_{n-1}$ are shortest paths we must have $x_1,x_{n-1} \not \in S$, so that $|S| \leq n-3$. Furthermore $S$ cannot contain both vertices $x_{n-2}$ and $x_1$ and likewise cannot contain both $x_2$ and $x_{n-1}$, so in any case $|S| \leq n-3$. Thus $\vp (\overline C_n) = n-3$ and we have $\vp (C_n) + \vp(\overline C_n) = n-1$.

Applying this argument to a vertex of maximum degree $\Delta $ and a vertex with minimum degree $\delta $ gives the stronger bound $\vp (G) + \vp (\overline {G}) \geq n-1+\Delta -\delta $, so we see that we have equality in Equation~\ref{eqn:NordhausGaddum} if and only if $G$ is regular and both $G$ and $\overline G$ have vertex position number equal to their maximum degree.

Trivially for $n \geq 2$ we have $\vp (G) \leq n$, with equality if and only if $G$ has an isolated vertex. Not both of $G$ and $\overline G$ can have an isolated vertex, for if $G$ has an isolated vertex, then $\overline G$ contains a universal vertex. Therefore we do not have equality in both $\vp (G) \leq n$ and $\vp (\overline G) \leq n$, so it follows that $\vp(G) +\vp(\overline G) \leq 2n-1$. Equality holds if and only if $G$ contains an isolated vertex or a universal vertex.
\end{proof}		

\section{Vertex position numbers of certain classes of graphs}\label{Section: graph classes}
In this section, we determine the $x$-position number of certain standard classes of graphs.
 
\begin{lemma}\label{lem:constant distance}
Let $x$ be a vertex of a connected graph $G$ and $S_x \subseteq V(G)$ be an $x$-position set of $G$. If $C_1,C_2,\dots,C_k$ are the components of $G[S_x]$, then there exist $r_1,r_2,\dots ,r_k$ such that $d(x,y) = r_i$ for all $y\in C_i$.
\end{lemma}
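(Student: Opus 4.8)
The plan is to show that within any single connected component $C_i$ of $G[S_x]$, all vertices lie at the same distance from $x$; since connectedness is built from edges, it suffices to prove that any two \emph{adjacent} vertices $u,v \in S_x$ satisfy $d(x,u) = d(x,v)$, and then propagate this equality along paths in $C_i$. So first I would reduce to the edge case: assuming $uv \in E(G)$ with $u,v \in S_x$, I want $d(x,u) = d(x,v)$.

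For the edge case, suppose for contradiction that $d(x,u) \neq d(x,v)$; without loss of generality say $d(x,u) < d(x,v)$. Since $u$ and $v$ are adjacent, the triangle inequality forces $d(x,v) \leq d(x,u) + 1$, hence $d(x,v) = d(x,u) + 1$. Now take any $x,u$-geodesic $P$ of length $d(x,u)$ and append the edge $uv$; this yields an $x,v$-walk of length $d(x,u)+1 = d(x,v)$, which is therefore an $x,v$-geodesic passing through $u$. But $u \in S_x \setminus \{v\}$, so this contradicts the assumption that $S_x$ is an $x$-position set. Hence $d(x,u) = d(x,v)$ whenever $u,v$ are adjacent vertices of $S_x$.

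Finally I would globalise: fix a component $C_i$ and pick any two vertices $y, y' \in C_i$. Since $C_i$ is connected, there is a path $y = w_0, w_1, \dots, w_m = y'$ with all $w_j \in C_i \subseteq S_x$ and consecutive $w_j$ adjacent in $G$. Applying the edge case to each consecutive pair gives $d(x,w_0) = d(x,w_1) = \cdots = d(x,w_m)$, so $d(x,y) = d(x,y')$. Setting $r_i := d(x,y)$ for any (equivalently every) $y \in C_i$ completes the argument.

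I do not anticipate a genuine obstacle here; the one point to state carefully is the short contradiction in the edge case (that prepending a geodesic to the edge $uv$ produces a geodesic through $u$), which is exactly where the defining property of an $x$-position set is used. Everything else is a routine connectivity/induction argument, and no earlier results from the paper beyond Definition~\ref{Def: main definition} are needed.
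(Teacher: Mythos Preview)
Your proof is correct and follows essentially the same approach as the paper: both reduce to a pair of adjacent vertices $z,z' \in S_x$ with $d(x,z') = d(x,z)+1$ and derive a contradiction by appending the edge $zz'$ to an $x,z$-geodesic. The only cosmetic difference is presentation order---the paper starts from two vertices at different distances in a component and locates the adjacent violating pair along a path between them, whereas you first establish the edge case and then propagate along a path---but the key idea is identical.
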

\begin{proof}
Suppose that there is a component $C$ of $G[S_x]$ and $y,y' \in V(C)$ such that $d(x,y) \not = d(x,y')$. Then, considering a shortest path from $y$ to $y'$ in $G[C]$, we see that there is a pair $z,z' \in V(C)$ such that $z \sim z'$ and $d(x,z') = d(x,z)+1$. However, this implies that an $x,z$-geodesic followed by the edge $z \sim z'$ is a shortest $x,z'$-path that passes through $z$, a contradiction.  
\end{proof}
\begin{theorem}\label{thm:bipartite graphs}
If $G$ is a bipartite graph, then $\vp(G)\leq \alpha(G)$.
\end{theorem}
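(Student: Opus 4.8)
The plan is to prove the stronger statement that $p_x(G) \le \alpha(G)$ for \emph{every} vertex $x$ of a bipartite graph $G$; the theorem then follows immediately by taking the maximum over all $x$. So I would fix an arbitrary vertex $x$, let $S_x$ be a $p_x$-set, and apply Lemma~\ref{lem:constant distance} to obtain the components $C_1, C_2, \dots, C_k$ of $G[S_x]$ together with values $r_1, \dots, r_k$ such that $d(x,y) = r_i$ for all $y \in C_i$.

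The key observation is that in a bipartite graph the distance from a fixed vertex controls the side of the bipartition: if $(A,B)$ is the bipartition with $x \in A$, then every vertex at even distance from $x$ lies in $A$ and every vertex at odd distance from $x$ lies in $B$ (the standard fact that the BFS layers of a bipartite graph alternate between the two sides). Consequently, any two adjacent vertices are at distances from $x$ of opposite parity, hence in particular at distinct distances from $x$. Combined with Lemma~\ref{lem:constant distance}, this forces each component $C_i$ to be a single vertex: if some $C_i$ contained an edge $yz$, then $d(x,y) = d(x,z) = r_i$, contradicting the parity statement.

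Therefore $G[S_x]$ has no edges, i.e. $S_x$ is an independent set, and so $p_x(G) = |S_x| \le \alpha(G)$. Since $x$ was arbitrary, $\vp(G) = \max_x p_x(G) \le \alpha(G)$, and the same argument yields $\vp^-(G) \le \alpha(G)$ as well.

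I do not anticipate any genuine obstacle: the only ingredient beyond Lemma~\ref{lem:constant distance} is the elementary parity property of distances in bipartite graphs, and once these two facts are placed side by side the conclusion is essentially immediate. The only point worth a sentence of care is the convention for $x$-position sets of size one (where $x$ may lie in $S_x$), but such sets are trivially independent, so this case causes no difficulty.
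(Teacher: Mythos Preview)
Your proposal is correct and follows essentially the same approach as the paper: both use Lemma~\ref{lem:constant distance} to conclude that any two adjacent vertices of $S_x$ lie at the same distance from $x$, then invoke bipartiteness to rule this out. The only cosmetic difference is that the paper phrases the contradiction via an odd closed walk (two geodesics of length $r$ together with the edge $yz$), whereas you phrase it via the parity of BFS layers; these are equivalent.
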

\begin{proof}
Let $S_x$ be an $x$-position set of $G$ and suppose for a contradiction that $S_x$ is not an independent set. Then there are $y,z \in S_x$ such that $y \sim z$ in $G$. By Lemma~\ref{lem:constant distance} we have $d(x,y) = d(x,z) = r$ for some $r \geq 1$. A shortest $x,y$-path, a shortest $x,z$-path and the edge $y \sim z$ together constitute an odd circuit, implying the existence of an odd cycle; since $G$ is bipartite, this is impossible and it follows that $p_x(G) \leq \alpha (G)$.
\end{proof}

\begin{theorem}\label{thm:multipartite graphs}
For $r \geq 2$, let $K_{n_1,n_2,\dots ,n_r}$ be the complete multipartite graph with partite sets $V_1,V_2,\dots, V_r$, where $n_i = |V_i|$ and $n_1 \geq n_2 \geq \dots \geq n_r$. Set $n = \sum _{i=1}^{r}n_i$. Then if the vertex $x$ lies in $V_i$, the $x$-position number is given by
\[ p_x(K_{n_1,n_2,\dots ,n_r}) = \max \{ n-n_i,n_i-1\} .\]
Thus $\vp (K_{n_1,n_2,\dots ,n_r}) = n-n_r$.
\end{theorem}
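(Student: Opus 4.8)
The plan is to exploit the very simple metric structure of a complete multipartite graph. Fix a vertex $x \in V_i$. Any vertex $y \notin V_i$ satisfies $d(x,y)=1$, so the unique $x,y$-geodesic is the edge $xy$ and has no internal vertex; any vertex $y \in V_i \setminus \{x\}$ satisfies $d(x,y)=2$, and the $x,y$-geodesics are precisely the paths $x,w,y$ with $w \in V(G)\setminus V_i$. I would record these two facts at the outset, since everything else follows from them. (When every $n_i=1$ the graph is $K_n$; this degenerate case behaves correctly under the formulas and can be noted in passing.)

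For the lower bound I would exhibit two explicit $x$-position sets. First, $V(G)\setminus V_i$: every geodesic from $x$ to a vertex of this set is a single edge and so contains no internal vertex at all, hence by Definition~\ref{Def: main definition} this is an $x$-position set, of order $n-n_i$. Second, $V_i \setminus \{x\}$: for $y,z \in V_i\setminus\{x\}$ the internal vertex of any $x,y$-geodesic lies outside $V_i$ and so is not $z$; hence this is an $x$-position set, of order $n_i-1$. Together these give $p_x(K_{n_1,\dots,n_r}) \geq \max\{n-n_i,\,n_i-1\}$.

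For the upper bound, let $S_x$ be any $x$-position set with $|S_x|\geq 2$; then $x \notin S_x$, since otherwise for some $y \in S_x$ with $y\neq x$ the vertex $x$ would be a point of the $x,y$-geodesic lying in $S_x\setminus\{y\}$. Now split into two cases. If $S_x$ contains some $y \in V_i\setminus\{x\}$, then $S_x$ contains no vertex $w \in V(G)\setminus V_i$, because such a $w$ is an internal vertex of the geodesic $x,w,y$; hence $S_x \subseteq V_i\setminus\{x\}$ and $|S_x|\leq n_i-1$. Otherwise $S_x \subseteq V(G)\setminus V_i$ and $|S_x|\leq n-n_i$. In all cases (the case $|S_x|=1$ being harmless, as $n-n_i\geq 1$ because $r\geq 2$) we get $p_x \leq \max\{n-n_i,\,n_i-1\}$, so equality holds.

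Finally, to compute $\vp(K_{n_1,\dots,n_r})$ I would maximise $\max\{n-n_i,\,n_i-1\}$ over $i$. Since $n_1\geq n_2\geq\dots\geq n_r$, the term $n-n_i$ is largest for $i=r$ and the term $n_i-1$ is largest for $i=1$, so $\vp = \max\{n-n_r,\,n_1-1\}$; and $n-n_r = n_1+\dots+n_{r-1} \geq n_1 > n_1-1$, which reduces this to $n-n_r$. I do not expect a genuine obstacle here: the only places needing care are the convention that $x\notin S_x$ when $|S_x|\geq2$, and checking that the two explicit sets above really satisfy the position condition for \emph{all} pairs, not just pairs involving $x$.
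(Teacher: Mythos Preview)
Your proof is correct and follows essentially the same route as the paper: both arguments show that any $x$-position set $S$ containing a vertex of $V_i\setminus\{x\}$ must avoid $V\setminus V_i$ (since every $z\in V\setminus V_i$ lies on the geodesic $x,z,y$), whence $S\subseteq V_i\setminus\{x\}$ or $S\subseteq V\setminus V_i$, and both of these sets are $x$-position sets. The paper is terser---it appeals to the level-set argument of Lemma~\ref{eccentricity bound} for the lower bound and omits the routine maximisation yielding $\vp=n-n_r$---whereas you verify these steps explicitly, but there is no substantive difference. One small remark: your closing caveat about checking the position condition ``for \emph{all} pairs, not just pairs involving $x$'' is slightly off, since by Definition~\ref{Def: main definition} every geodesic under consideration has $x$ as an endpoint; what must be checked is that for each $y\in S_x$ no \emph{other} element of $S_x$ lies on any $x,y$-geodesic, and this you have already done.
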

\begin{proof}
Let $x \in V_i$ and let $S$ be a maximum $x$-position set of the graph. Set $V = \bigcup_{i = 1}^{r} V_i$. Suppose that $S$ contains a vertex $y \in V_i\setminus \{ x\} $; for any vertex $z \in V \setminus V_i$ the path $x,z,y$ is a geodesic, so that in this case $S \cap (V \setminus V_i) = \emptyset $. Thus either $S \subseteq V\setminus V_i$ or $S \subseteq V_i\setminus \{ x\} $. Conversely, both of these sets are $x$-position sets by the argument of Lemma~\ref{eccentricity bound}, which yields the claimed bounds.
\end{proof}
Theorem~\ref{thm:multipartite graphs} shows that equality holds in the bound of Theorem~\ref{thm:bipartite graphs} for all complete bipartite graphs.

\begin{lemma}
Let $G$ be a connected graph of order $n$. Then for each $x\in V(G)$ there is a maximum $x$-position set without cutvertices.
\end{lemma}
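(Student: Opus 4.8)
The plan is to start with an arbitrary maximum $x$-position set $S_x$ and show that if it contains a cutvertex, we can replace that cutvertex by another vertex at the same distance from $x$ without destroying the $x$-position property or decreasing the cardinality. Suppose $c \in S_x$ is a cutvertex of $G$, and write $r = d(x,c)$ (by Lemma~\ref{lem:constant distance} every vertex in the component of $G[S_x]$ containing $c$ is also at distance $r$ from $x$). Since $c$ is a cutvertex, $G - c$ has at least two components; let $B$ be the block containing $c$ that does not contain $x$ (more precisely, pick a component $D$ of $G - c$ with $x \notin D$, and note that every $x,v$-geodesic for $v \in D$ must pass through $c$). The idea is to look at the neighbours of $c$ inside $D$: each such neighbour $w$ satisfies $d(x,w) = r+1$, because $c$ lies on every $x,w$-geodesic. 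I would then argue that $S_x$ contains no vertex of $D$ at all (any $v \in D \cap S_x$ would have $c$ on its $x,v$-geodesic, contradicting that $S_x$ is an $x$-position set, since $c \in S_x$), and in particular $w \notin S_x$ for every neighbour $w$ of $c$ in $D$.

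Next I would form $S_x' = (S_x \setminus \{c\}) \cup \{w\}$ for a fixed neighbour $w$ of $c$ in $D$ and verify that $S_x'$ is again an $x$-position set. The vertex $w$ has $d(x,w) = r+1$, and every $x,w$-geodesic passes through $c$; since $c \notin S_x'$, no issue arises from $w$'s own geodesics hitting $S_x'\setminus\{w\}$ provided no other vertex of $S_x'$ lies on an $x,w$-geodesic — but any such vertex would lie strictly between $x$ and $c$ on a geodesic, hence be on an $x,c$-geodesic, contradicting that the old set $S_x$ was an $x$-position set. Conversely, I must check that removing $c$ and inserting $w$ does not cause $w$ to lie on some $x,y$-geodesic for another $y \in S_x'$: such a $y$ would have to lie in $D$ (since $w \in D$ and every geodesic leaving $D$ goes through $c$), but we already showed $S_x \cap D = \emptyset$, so $y \notin S_x'$ either. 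Finally $|S_x'| = |S_x|$, so $S_x'$ is still maximum, and it has one fewer cutvertex of $G$ among its elements than $S_x$ did (we removed $c$ and $w$ is not a cutvertex of $G$, or if it is we can iterate — here I would set up a clean induction or a minimal-counterexample argument on the number of cutvertices in the chosen maximum set).

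The main obstacle I anticipate is the bookkeeping in the induction: after the swap, the new vertex $w$ might itself be a cutvertex of $G$, so a single exchange need not finish the argument, and one must be careful that the iterative process terminates. A clean way around this is to choose, among all maximum $x$-position sets, one that minimises the number of cutvertices it contains, and then derive a contradiction from the exchange above — the swap strictly decreases that count since $D$ can be chosen as an \emph{endblock} side (take $D$ to be a component of $G-c$ that is itself $2$-connected-internally, or simply pick $w$ to be a vertex lying in an endblock of the block-cut tree on the far side of $c$ from $x$, which is guaranteed not to be a cutvertex of $G$). A secondary subtlety is confirming $d(x,w) = r+1$ rather than something smaller; this follows precisely because $c$ separates $w$ from $x$, so every $x,w$-path has length at least $d(x,c)+1 = r+1$, and prepending a shortest $x,c$-path to the edge $cw$ achieves it. With these points handled, the exchange argument goes through and the lemma follows.
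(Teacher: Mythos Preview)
Your approach is essentially the paper's: both proofs remove a cutvertex $c$ from a maximum $x$-position set and replace it by a vertex in a component $D$ of $G-c$ not containing $x$, using that $S_x\cap D=\emptyset$. Your verification that the swapped set remains an $x$-position set is correct, and in fact works for \emph{any} $w\in D$, not only neighbours of $c$ (every $x,w$-geodesic still decomposes as an $x,c$-geodesic followed by a $c,w$-geodesic inside $D$).

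The one place your write-up is looser than the paper is the termination step. You first fix $w$ to be a neighbour of $c$, then --- when you notice $w$ may again be a cutvertex --- switch to ``$w$ in an endblock on the far side'', which need not be a neighbour of $c$; you never reconcile these two choices. The paper sidesteps this cleanly by taking $w$ to be a vertex of $D$ at \emph{maximum} distance from $c$: such a $w$ cannot be a cutvertex of $G$, since otherwise a neighbour of $w$ in a component of $G-w$ not containing $c$ would lie in $D$ at strictly greater distance from $c$. This one-line choice eliminates the need for iteration or block--cut--tree reasoning. (The paper also observes, though it is not needed for the lemma, that when $G-c$ has $k\ge 3$ components one can add a vertex from each of them and strictly enlarge the set, contradicting maximality.)
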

\begin{proof}
Suppose that there is a maximum $x$-position set $M$ containing a cutvertex $v \neq x$ of $G$. Let $C_1,C_2,\dots,C_k$ be the components of $G\setminus \{v\}$, where $k \geq 2$. Without loss of generality we may assume that $x\in V(C_1)$. Then it follows that $M\cap V(C_i)=\emptyset$ for all $i=2,3,\dots,k$. Let $u_i$ be any vertex in $C_i$ for all $i=2,3,\dots,k$. If $k\geq 3$, then the set $M^\prime = (M\setminus \{v\})\cup \{u_2,u_3,\dots,u_k\}$ is an $x$-position set with order greater than $M$, a contradiction to the maximality of $M$. Hence $k = 2$. Let $u$ be a farthest vertex from $v$ in $C_2$. Then $u$ is not a cutvertex in $G$. Moreover, $M^\prime = (M\setminus \{v\})\cup \{u\}$ is a maximum $x$-position set containing fewer cutvertices than $M$; this implies the existence of a maximum $x$-position set without cutvertices in $G$.
\end{proof}

\begin{theorem}
For any block graph G,
 \[ p_x(G)= \begin{cases}
 \s(G)-1, &\text{if $x$ is a simplicial vertex, } \\
 \s(G), & \text{otherwise.}
 \end{cases}\]
\end{theorem}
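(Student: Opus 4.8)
The plan is to prove both cases by identifying, for a block graph $G$, exactly which vertices can appear together in an $x$-position set. The key structural fact about block graphs is that for any two vertices $u,w$ the $u,w$-geodesic is unique (it is the unique path through the block-cut tree), so the $x$-position condition becomes: $S$ is an $x$-position set iff no vertex of $S\setminus\{y\}$ lies on the unique $x,y$-path, for every $y\in S$. I would first establish the easy lower bounds by exhibiting explicit $x$-position sets: using the fact (from Proposition~\ref{pro:boundary} and the remark following it) that $\ext(G)\setminus\{x\}$ is always an $x$-position set, we get $p_x(G)\geq \s(G)-1$ when $x$ is simplicial and $p_x(G)\geq \s(G)$ otherwise, since distinct simplicial vertices in a block graph never lie on each other's geodesics (a simplicial vertex is an endpoint of the block-cut tree and so is internal to no geodesic between two other vertices).

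For the upper bound I would use the lemma just proved that there is a maximum $x$-position set $M$ containing no cutvertex. So $M$ consists entirely of non-cutvertices. In a block graph, a non-cutvertex lies in exactly one block, and within that block it is adjacent to every other vertex; if the block is an edge-block or the vertex is the unique non-cutvertex in a larger block it is simplicial, but in general a block of size $\geq 3$ may contain several non-cutvertices which are pairwise adjacent and all simplicial-or-not depending on the block. The crucial observation is: if two non-cutvertices $y,z$ lie in the same block $B$, then $y\sim z$ and $d(x,y)=d(x,z)$ (both equal $d(x,c)+1$ where $c$ is the cutvertex of $B$ nearest $x$, or they lie in the block containing $x$), and moreover any third non-cutvertex $w$ in another block has its $x,w$-geodesic disjoint from $\{y,z\}$ only if \dots{}—actually the clean statement is that $M$ can contain at most one vertex from each block. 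Indeed if $y,z\in M$ both lie in block $B$ with $B$ not containing $x$, let $c$ be the cutvertex of $B$ closest to $x$; then the $x,y$-geodesic passes through $c$ then directly to $y$, and since $z$ is not on it, fine—but the $x$-position set can contain both $y$ and $z$! This shows the bound is not simply "one per block," so the real argument must be: $M$ contains at most one non-cutvertex from each block \emph{except} that the totality of non-cutvertices of $G$ is exactly $\ext(G)$ together with, possibly, extra non-cutvertices in blocks of size $\geq 3$; one checks that in a block graph every non-cutvertex is in fact simplicial, because its unique block induces a clique. Hence the set of non-cutvertices is precisely $\ext(G)$, and so $M\subseteq \ext(G)$ (minus $x$ if $x$ is simplicial), giving $|M|\leq \s(G)-1$ or $\s(G)$ as required.

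So the argument streamlines to: (i) every non-cutvertex of a block graph is simplicial, hence the set of non-cutvertices equals $\ext(G)$; (ii) by the preceding lemma, some maximum $x$-position set $M$ avoids cutvertices, so $M\subseteq \ext(G)$, and $M\subseteq \ext(G)\setminus\{x\}$ when $x$ itself is simplicial, yielding the upper bounds; (iii) conversely $\ext(G)\setminus\{x\}$ is an $x$-position set of the required size, because for two simplicial vertices $y,z$ the unique $x,y$-geodesic cannot contain $z$ (a simplicial vertex has degree equal to its block size minus one and is a leaf of the block-cut tree, hence lies on no geodesic between two other vertices), and likewise $x$ (if simplicial) lies on no geodesic from $x$—trivially—so all geodesics are clear. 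Combining (ii) and (iii) gives equality in both cases.

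\textbf{Main obstacle.} The one step requiring genuine care is (iii): verifying that $\ext(G)\setminus\{x\}$ really is an $x$-position set, i.e. that no simplicial vertex lies on an $x,y$-geodesic for another simplicial $y$. This reduces to the fact that in a block graph a simplicial vertex is a leaf of the block-cut tree, so it is an internal vertex of no $u,w$-path with $u,w$ distinct from it; I would make this precise via the block-cut tree. The lower-bound direction for the non-simplicial case also needs the observation that when $x$ is a cutvertex (or a non-simplicial non-cutvertex) it genuinely lies on no geodesic joining two simplicial vertices and so need not be excluded, which again follows from the tree structure. None of this is hard, but it is where the block-graph hypothesis is actually used, so it deserves to be spelled out carefully rather than asserted.
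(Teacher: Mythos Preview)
Your proposal is correct and, once you arrive at the streamlined version (i)--(iii), it is essentially the paper's argument: the paper also invokes the preceding lemma to obtain a maximum $x$-position set without cutvertices, then uses the fact that in a block graph every vertex is either a cutvertex or simplicial to conclude $M\subseteq\ext(G)$ (respectively $\ext(G)\setminus\{x\}$). For the lower bound the paper simply cites the remark after Proposition~\ref{pro:boundary} that $\ext(G)\setminus\{x\}$ is an $x$-position set in \emph{any} graph, so your ``main obstacle'' (iii) is already handled globally and need not be re-argued via the block-cut tree; also note that in a block graph there are no non-simplicial non-cutvertices, so your parenthetical case is vacuous.
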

\begin{proof}
First suppose that $x$ is a simplicial vertex. Then it is clear that $\s(G)\setminus \{x\}$ is an $x$-position set in $G$. Hence $p_x(G)\geq \s(G)-1$. On the other hand, in a block graph each vertex is either a cutvertex or a simplicial vertex. By the above lemma, $G$ contains a maximum $x$-position set without cutvertices. Hence the result follows.
\end{proof}

\begin{corollary}\label{cor:trees}
For any tree $T$ with $\ell $ leaves we have
\[ p_x(T)=\begin{cases}
 \ell -1, &\text{if $x$ is a leaf, } \\
 \ell , & \text{otherwise.}
 \end{cases}\]
\end{corollary}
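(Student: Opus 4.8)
The plan is to deduce this corollary directly from the preceding theorem on block graphs, since a tree is the archetypal block graph. First I would observe that every tree $T$ is a block graph: each of its blocks is a single edge $K_2$, which is trivially a clique, so every maximal $2$-connected component is a clique. Hence the formula
\[ p_x(G)= \begin{cases} \s(G)-1, &\text{if $x$ is simplicial,} \\ \s(G), & \text{otherwise,}\end{cases}\]
applies verbatim to $T$.

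The only remaining task is to translate the quantity $\s(T)$ and the notion of a simplicial vertex into the language of trees. A vertex $v$ of a tree is simplicial precisely when the subgraph induced by $N(v)$ is complete; since a tree has no triangles, this happens exactly when $|N(v)| \leq 1$, i.e.\ when $v$ is a leaf (a vertex of degree $1$). Isolated vertices do not arise as $T$ is assumed connected with at least two vertices in the relevant cases. Therefore the simplicial vertices of $T$ are exactly its leaves, and $\s(T) = \ell$, the number of leaves of $T$. Substituting $\s(T) = \ell$ and ``$x$ simplicial'' $\iff$ ``$x$ a leaf'' into the block-graph theorem gives the stated formula.

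There is essentially no obstacle here; the corollary is a direct specialisation. The one point deserving a sentence of care is the boundary behaviour for very small trees — for $T = K_1$ or $T = K_2$ one should check the convention for $\ell$ (a single edge has two leaves, giving $p_x(K_2) = 1$, consistent with Corollary~\ref{cor:path}) — but this is routine and does not affect the argument.
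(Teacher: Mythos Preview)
Your proposal is correct and matches the paper's approach exactly: the paper states this as an immediate corollary of the block-graph theorem with no explicit proof, and your deduction (trees are block graphs, simplicial vertices of a tree are precisely its leaves, so $\s(T)=\ell$) is exactly the intended specialisation.
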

Corollary~\ref{cor:trees} implies the following bound for the vertex position numbers in terms of the girth of the graph.
\begin{theorem}\label{girth bound}
If a graph $G$ has girth $g$ and minimum degree $\delta \geq 2$ and there are $N$ vertices at distance less than or equal to $\left \lfloor \frac{g-1}{2} \right \rfloor -1$ from a vertex $u$, then $p_u(G) \leq n-N$.
\end{theorem}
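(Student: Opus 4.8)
The plan is to exploit the fact that, within the ball $B$ of radius $\lfloor (g-1)/2\rfloor - 1$ around $u$, the girth condition forces the induced structure to behave like a tree, so that an $x$-position argument localised to this ball (together with Corollary~\ref{cor:trees}) bounds how many vertices of $B$ can lie in a $u$-position set. First I would set $k = \lfloor \frac{g-1}{2}\rfloor$, let $B$ denote the set of the $N$ vertices at distance $\le k-1$ from $u$, and consider the subgraph $H$ induced by all vertices at distance $\le k$ from $u$. The key geometric observation is that since $G$ has girth $g \ge 2k+1$, any two distinct $u,v$-geodesics with $v \in B$ would, if they diverged, close up into a cycle of length at most $2(k-1) < g$; more generally the BFS layers $V_0,V_1,\dots,V_{k-1}$ around $u$ carry a unique-shortest-path structure, and every vertex of $B$ has a unique neighbour in the previous layer. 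Hence the ``shortest-path tree'' $\tau$ rooted at $u$ restricted to $B$ really does capture all geodesics from $u$ to vertices of $B$: for $y \in B$, the $u,y$-geodesic is unique and lies inside $B \cup \{u\}$.

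Next I would argue that if $S_u$ is a maximum $u$-position set, then $S_u \cap B$ is forced to be small. The point is that for $y, y' \in S_u \cap B$ with $y'$ on the $u,y$-geodesic we get an immediate contradiction, exactly as in the tree case; so $S_u \cap (B \cup \{u\})$ must be an ``antichain'' in the geodesic order on the tree $\tau$. Since a leaf-to-root antichain in a tree has size at most the number of leaves of $\tau$, and every vertex of $B$ has degree $\ge \delta \ge 2$ in $G$ so contributes further branching deeper in, one can show that in fact $u$-position sets cannot contain all of $B$: at least one vertex of $B$ must be excluded from $S_u$. More precisely, because $\delta \ge 2$, the vertex $u$ itself (or any internal vertex of $\tau$) is not a leaf, and one mimics the block-graph/tree counting of Corollary~\ref{cor:trees} applied to $\tau$ to conclude $|S_u \cap (B\cup\{u\})| \le |B| - $ (something nonnegative), hence $|S_u| \le n - N$ after accounting for the $N$ vertices of $B$; the cleanest route is to show directly that $B \not\subseteq S_u$, i.e.\ $u$-position sets miss at least one of the $N$ vertices, giving $p_u(G) \le (n-1) - (N-1) \cdot[\dots]$ — I would tune the constant so that the final inequality reads $p_u(G) \le n - N$.

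The main obstacle I expect is making the ``$B$ looks like a tree'' claim precise enough to run the leaf-counting argument: one must verify not only that geodesics from $u$ to $B$ are unique, but that a vertex of $S_u$ lying in layer $V_j$ with $j \le k-1$ genuinely blocks, via $\tau$, all the deeper vertices of $B$ below it, and that the minimum-degree hypothesis $\delta \ge 2$ guarantees the tree $\tau$ (or the relevant subtree) has strictly fewer leaves than it has non-root vertices — this is what produces the ``$-1$'' slack and hence the inequality $p_u(G) \le n-N$ rather than the weaker $p_u(G) \le n - N + 1$. Once the uniqueness of geodesics inside $B$ and the bijection between the deleted vertices of $B$ and the leaves of the relevant shortest-path subtree are established, the bound follows by the same counting as in Corollary~\ref{cor:trees}.
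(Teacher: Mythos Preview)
Your overall strategy --- look at the ball of radius $r=\lfloor(g-1)/2\rfloor$ around $u$, observe that the girth forces a tree structure there, and then invoke Corollary~\ref{cor:trees} --- is exactly the paper's approach. Where your proposal goes wrong is in the final counting, and the uncertainty you flag (``at least one vertex of $B$ must be excluded'', ``I would tune the constant'', ``$n-N$ vs.\ $n-N+1$'') is the symptom.

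The point you are missing is much stronger than ``at least one vertex of $B$ is excluded'': in fact \emph{every} vertex of $B$ is excluded from any $u$-position set. Here is why. Take any vertex $v$ at distance $j$ from $u$ with $0\le j\le r-1$. By the girth hypothesis, $v$ has at most one neighbour at distance $j-1$ from $u$ and no neighbour at distance $j$ (either would close a cycle of length $\le 2j+1\le 2r-1<g$). Since $\delta\ge 2$, $v$ therefore has a neighbour at distance $j+1$. Hence in the tree $H$ induced on the ball of radius $r$, every vertex of $B$ is an internal vertex, and the leaves of $H$ are precisely the vertices at distance exactly $r$. Now Corollary~\ref{cor:trees} (applied to $H$, whose geodesics from $u$ coincide with those in $G$; the possible extra edges among level-$r$ vertices do not affect $u$-geodesics) says that a $u$-position set meets $V(H)$ only in leaves, i.e.\ only in level $r$. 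Thus $S_u\cap B=\emptyset$, and since $|B|=N$ we get $|S_u|\le n-N$ immediately --- no tuning needed. Your antichain language is correct but you stopped short of observing that the antichain is forced all the way down to the bottom layer.
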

\begin{proof}
Set $r = \left \lfloor \frac{g-1}{2} \right \rfloor$. Fix a vertex $u$ of $G$ and consider the subgraph $G'$ induced by the vertices at distance at most $r$ from $u$. $G'$ is isomorphic to a tree, possibly with some edges added between the vertices at distance $r$ from $u$. It follows from Corollary~\ref{cor:trees} that the largest number of vertices from $G'$ that can belong to a $u$-position set is the number of vertices at distance exactly $r$ from $u$; hence there are at least $N$ vertices missing from any $u$-position set and $p_u(G) \leq n-N$.
\end{proof}
\begin{figure}
		\centering
		\begin{tikzpicture}[x=0.2mm,y=-0.2mm,inner sep=0.2mm,scale=0.7,thick,vertex/.style={circle,draw,minimum size=10}]
			\node at (180,200) [vertex] (v1) {$x$};
			\node at (8.8,324.4) [vertex] (v2) {};
			\node at (74.2,525.6) [vertex,fill=red] (v3) {};
			\node at (285.8,525.6) [vertex,fill=red] (v4) {};
			\node at (351.2,324.4) [vertex] (v5) {};
			\node at (180,272) [vertex] (v6) {};
			\node at (116.5,467.4) [vertex,fill=red] (v7) {};
			\node at (282.7,346.6) [vertex,fill=red] (v8) {};
			\node at (77.3,346.6) [vertex,fill=red] (v9) {};
			\node at (243.5,467.4) [vertex,fill=red] (v10) {};

			\path
			(v1) edge (v2)
			(v1) edge (v5)
			(v2) edge (v3)
			(v3) edge (v4)
			(v4) edge (v5)
			
			(v6) edge (v7)
			(v6) edge (v10)
			(v7) edge (v8)
			(v8) edge (v9)
			(v9) edge (v10)
			
			(v1) edge (v6)
			(v2) edge (v9)
			(v3) edge (v7)
			(v4) edge (v10)
			(v5) edge (v8)

			;
		\end{tikzpicture}
		\caption{A largest $x$-position set (red vertices) in the Petersen graph}
		\label{fig:Petersen}
	\end{figure}
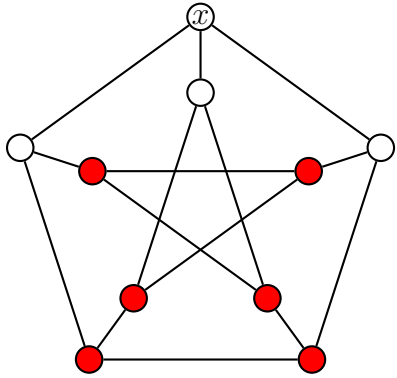 
	
Theorem~\ref{girth bound} is tight for the Petersen and Hoffman-Singleton graphs by Lemma~\ref{eccentricity bound}. Finally, we note that Lemma~\ref{lem:degree bound} gives the vertex position number of sufficiently large Kneser graphs. The Kneser graph $K(n,k)$ is the graph with vertex set equal to all $k$-subsets of $\{ 1,2,\dots ,n\} $ with an edge between any two such subsets if and only if they are disjoint.
\begin{theorem}\label{thm:Kneser}
For sufficiently large $n$ we have $\vp (K(n,k)) = \binom{n-k}{k}$.
\end{theorem}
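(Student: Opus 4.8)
Since $K(n,k)$ is vertex-transitive and $\binom{n-k}{k}$-regular (a $k$-set $A$ is adjacent to exactly the $k$-subsets of $[n]\setminus A$), Lemma~\ref{lem:degree bound} gives $\vp(K(n,k))\ge\binom{n-k}{k}$ immediately, so the whole task is to prove the matching upper bound $p_x(K(n,k))\le\binom{n-k}{k}$ for an arbitrary vertex $x=A$. The first step I would take is to record that for $n\ge 3k-1$ the graph has diameter $2$: two distinct $k$-sets at distance more than $1$ must meet, so their union has at most $2k-1$ elements and at least $n-2k+1\ge k$ points remain from which to pick a $k$-set disjoint from both, i.e.\ a common neighbour. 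Hence for such $n$ every $x$-geodesic has length $1$ or $2$.

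Next I would analyse a maximum $x$-position set $S$. Split $S=S_1\cup S_2$ according to whether a vertex is at distance $1$ or $2$ from $x$; so $S_1$ is a family of $k$-subsets of $[n]\setminus A$, while each $B\in S_2$ is a $k$-set with $B\cap A\ne\emptyset$ and $B\ne A$. A vertex lying on an $x,B$-geodesic for $B\in S_2$ is a common neighbour of $A$ and $B$, that is, a $k$-set disjoint from $A\cup B$; such a set is disjoint from $A$, so it can only lie in $S_1$. Therefore the $x$-position property is equivalent to the statement that \emph{no member of $S_1$ is disjoint from any member of $S_2$} --- equivalently, no Kneser edge joins $S_1$ to $S_2$ --- and there is no further restriction, since the $x$-geodesics to members of $S_1$ are single edges. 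The theorem thus becomes a counting estimate for a cross-intersecting pair of families.

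For the estimate, if $S_2=\emptyset$ then $|S|=|S_1|\le\binom{n-k}{k}$, and if $S_1=\emptyset$ then $|S|=|S_2|$ is at most the number of $k$-sets meeting $A$, namely $\binom{n}{k}-\binom{n-k}{k}$, which is less than $\binom{n-k}{k}$ once $n$ is large because $\binom{n}{k}/\binom{n-k}{k}\to 1$ as $n\to\infty$. In the remaining case fix $C_0\in S_1$ and $B_0\in S_2$ and put $j=|B_0\setminus A|\in\{1,\dots,k-1\}$. Every member of $S_1$ meets the $j$-set $B_0\setminus A$ inside $[n]\setminus A$, so $|S_1|\le\binom{n-k}{k}-\binom{n-k-j}{k}\le\binom{n-k}{k}-\binom{n-2k+1}{k}$; and every member of $S_2$ meets both of the disjoint $k$-sets $A$ and $C_0$, so inclusion--exclusion gives $|S_2|\le\binom{n}{k}-2\binom{n-k}{k}+\binom{n-2k}{k}$. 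Adding the two estimates, the target inequality $|S|\le\binom{n-k}{k}$ collapses (using Pascal's identity $\binom{n-2k+1}{k}-\binom{n-2k}{k}=\binom{n-2k}{k-1}$) to $\binom{n}{k}-2\binom{n-k}{k}\le\binom{n-2k}{k-1}$, and the left-hand side is negative while the right-hand side is nonnegative once $n$ is large enough.

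The delicate point, and the one I expect to be the main obstacle, is precisely this mixed case: replacing $|S_1|$ by the trivial bound $\binom{n-k}{k}$ is not sufficient --- combined with any bound on $|S_2|$ it overshoots $\binom{n-k}{k}$ by a term of order $n^{k-2}$ --- so one must use simultaneously the deficiency of order $n^{k-1}$ that a single vertex of $S_2$ forces in $S_1$ and the fact that every vertex of $S_2$ meets two disjoint $k$-sets; only the combination of these two closes an asymptotically tight gap. A minor additional task is to make ``sufficiently large'' explicit: it suffices that $n\ge 3k-1$ and $\binom{n}{k}<2\binom{n-k}{k}$, both of which hold once $n$ exceeds a suitable quadratic in $k$.
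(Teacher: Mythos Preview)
Your argument is correct, but it is more elaborate than the paper's, and your diagnosis of the ``delicate point'' is off. The paper proceeds exactly as you do through the lower bound, the diameter-$2$ reduction, and the observation that if $S$ contains a vertex $B$ at distance~$2$ with $|A\cap B|=i$, then $S$ must avoid all $\binom{n-2k+i}{k}$ common neighbours of $A$ and $B$. But then the paper simply notes that this gives $|S|\le\binom{n}{k}-\binom{n-2k+i}{k}$, which is a polynomial in $n$ of degree $k-1$ (the leading $n^k/k!$ terms cancel), whereas $\binom{n-k}{k}$ has degree $k$; hence for large $n$ the presence of \emph{any} vertex at distance~$2$ already forces $|S|<\binom{n-k}{k}$, and we are done without ever touching a vertex $C_0\in S_1$.

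In your notation this is exactly the combination of your non-trivial bound on $|S_1|$ with the \emph{trivial} bound $|S_2|\le\binom{n}{k}-1-\binom{n-k}{k}$, which sums to $\binom{n}{k}-1-\binom{n-k-j}{k}$, again degree $k-1$. So your claim that ``only the combination of these two closes an asymptotically tight gap'' is not right: the deficiency a single $B_0\in S_2$ forces in $S_1$ is of order $n^{k-1}$, and that already dominates the entire size $|S_2|\le\binom{n}{k}-\binom{n-k}{k}=O(n^{k-1})$ of the second level; the inclusion--exclusion bound on $|S_2|$ via $C_0$ is unnecessary. Your route does work and gives a somewhat more explicit threshold, but it obscures the one-line asymptotic reason the theorem holds.
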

\begin{proof}
For $n \geq 3k$ the Kneser graph $K(n,k)$ has diameter two. Note that $K(n,k)$ is vertex-transitive, so $\vp (K(n,k)) = \vp ^-(K(n,k))$ and we can without loss of generality consider the vertex $\{ 1,2,\dots ,k\} $; let $S$ be a largest $\{ 1,2,\dots ,k\} $-position set.  Lemma~\ref{lem:degree bound} gives $|S| = \vp (K(n,k)) \geq \binom{n-k}{k}$. Suppose that $S$ contains a vertex at distance two from $\{ 1,2,\dots ,k\} $, say $\{ 1,2,\dots ,i,j_{i+1},j_{i+2},\dots ,j_k\} $, where \[ \{ 1,2,\dots ,k\} \cap \{ 1,2,\dots ,i,j_{i+1},j_{i+2},\dots ,j_k\} = \{ 1,2,\dots ,i\} . \] As $\{ 1,2,\dots ,k\} $ and $\{ 1,2,\dots ,i,j_{i+1},j_{i+2},\dots ,j_k\} $ have exactly $\binom{n-2k+i}{k}$ common neighbours, we would have $|S| \leq \binom{n}{k}-\binom{n-2k+i}{k}$, which is a polynomial of degree $k-1$, whereas the vertex degree is a polynomial function of $n$ with degree $k$. Thus for sufficiently large $n$ compared to $k$ the bound in Lemma~\ref{lem:degree bound} is best possible.
\end{proof}
Interestingly Theorem~\ref{thm:Kneser} implies that for large $n$ the vertex position number of $K(n,k)$ is significantly larger than the general position number, as given in~\cite{GhoMaiMahMomKlaRus}. For small $n$ Lemma~\ref{lem:degree bound} is not optimal; for example, as previously noted for the Petersen graph $P$ (isomorphic to $K(5,2)$) the second neighbourhood of a vertex is a largest vertex position set and $\vp (P) = \gp (P) = 6$.

\section{Characterisation results}\label{Section: characterisation}

We now make use of the bounds derived in Section~\ref{Section: vertex position sets} to characterise graphs with very large or very small vertex position numbers.

\begin{corollary}\label{cor:position num n-1}
A connected graph $G$ with order $n$ satisfies $\vp(G) = n-1$ if and only if $G$ contains a universal vertex, whilst $\vp^-(G) = n-1$ if and only if $G$ is a complete graph.
\end{corollary}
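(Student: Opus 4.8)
The plan is to prove each biconditional separately, using the eccentricity bounds of Lemma~\ref{lem:eccentricity upper bound} together with the degree bound of Lemma~\ref{lem:degree bound}. For the first statement, the easy direction is that if $G$ has a universal vertex $u$, then $d(u) = n-1$, so $\vp(G) \geq p_u(G) \geq n-1$ by Lemma~\ref{lem:degree bound}, and since $G$ is connected of order $n \geq 2$ we always have $\vp(G) \leq n-1$; hence $\vp(G) = n-1$. For the converse, suppose $\vp(G) = n-1$ and let $x$ be a vertex with $p_x(G) = n-1$. I would first observe that by Lemma~\ref{lem:eccentricity upper bound} we have $n-1 = p_x(G) \leq n - \e(x)$, forcing $\e(x) = 1$, i.e.\ $x$ is a universal vertex. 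That settles the first equivalence cleanly.

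For the second statement, if $G = K_n$ then every vertex is universal, so $p_x(G) = n-1$ for all $x$ and thus $\vp^-(G) = n-1$. Conversely, suppose $\vp^-(G) = n-1$. Then for \emph{every} vertex $x$ we have $p_x(G) = n-1$, and by the argument just given (Lemma~\ref{lem:eccentricity upper bound}) every vertex $x$ satisfies $\e(x) = 1$, so every vertex is universal, which means $G = K_n$. This direction is essentially immediate once the first part is in hand, since the minimum over all vertices equals $n-1$ iff every single vertex achieves $p_x(G) = n-1$.

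I expect the main (minor) obstacle to be bookkeeping of the trivial boundary cases — $n = 1$ and $n = 2$ — where ``universal vertex'' and ``complete graph'' coincide and the bound $\vp(G) \leq n-1$ needs the convention from Definition~\ref{Def: main definition} that $x$ itself is excluded from an $x$-position set of order $\geq 2$. For $n = 1$ the statement is vacuous or trivial depending on convention, and for $n = 2$ we have $K_2$ with $\vp = \vp^- = 1 = n-1$, consistent with both characterisations. Beyond that, there is really no difficulty: the whole argument reduces to the single observation that $p_x(G) = n-1$ together with Lemma~\ref{lem:eccentricity upper bound} forces $\e(x) = 1$. I would state this observation once and invoke it for both equivalences.
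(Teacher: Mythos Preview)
Your proposal is correct and follows essentially the same approach as the paper: use Lemma~\ref{lem:degree bound} to show a universal vertex gives $p_u(G)=n-1$, and use Lemma~\ref{lem:eccentricity upper bound} to conclude that $p_x(G)=n-1$ forces $\e(x)=1$, then iterate over all vertices for the $\vp^-$ statement. The paper's proof is terser and omits the boundary-case discussion, but the logical content is identical.
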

\begin{proof}
By Lemma~\ref{lem:degree bound} any universal vertex $u$ has vertex position number $p_u(G) = n-1$. Conversely, by Lemma~\ref{lem:eccentricity upper bound} any vertex $u$ with $p_u(G) = n-1$ has eccentricity one and hence is universal. If $\vp ^-(G) = n-1$, it follows that $G$ is complete.  
\end{proof}

\begin{corollary}\label{cor:cycles}
A connected graph $G$ satisfies $\vp^-(G) = \vp(G) = 2$ if and only if $G$ is a cycle. Similarly $\vp(G) = 2$ only for cycles and paths of length $\geq 2$.
\end{corollary}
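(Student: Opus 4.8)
The plan is to prove the two statements in turn, using the eccentricity bounds from Section~\ref{Section: vertex position sets} together with a direct structural analysis. For the first statement, one direction is easy: if $G = C_n$ is a cycle then by vertex-transitivity $\vp^-(G) = \vp(G)$, and a routine check (which I would in fact deduce from the argument in Theorem~\ref{thm:Nordhasu Gaddum}, where $\vp(C_n) = 2$ is already invoked via this very corollary, so I must instead argue directly) shows that for any vertex $x$ of $C_n$ the two vertices at distance $\lfloor n/2\rfloor$ and $\lceil n/2\rceil$ (or the single antipodal vertex, when $n$ is even) form an $x$-position set, and no three vertices can: if $y,z,w$ are all in an $x$-position set with $d(x,y)\le d(x,z)\le d(x,w)$, then since there are only two $x$-geodesics in a cycle and any vertex lies on the geodesic to vertices ``further along'' in its direction, two of $y,z,w$ lie on the same side and the nearer one blocks the farther. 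Hence $\vp^-(C_n)=\vp(C_n)=2$.

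For the converse, suppose $\vp^-(G) = \vp(G) = 2$. By Corollary~\ref{cor:path} $G$ is not a path, so $G$ contains a cycle, hence has minimum degree $\delta \geq \dots$ — more carefully, if $G$ had a vertex of degree $1$ we could still have $\vp^-=2$, so instead I would argue: by Lemma~\ref{lem:degree bound}, $\Delta \leq \vp(G) = 2$, so $G$ is a path or a cycle; since $G$ is not a path (it has $\vp^- = 2 > 1$, using Corollary~\ref{cor:path}), $G$ is a cycle. This is clean and short. The second statement follows the same way: $\vp(G) = 2$ forces $\Delta \leq 2$ by Lemma~\ref{lem:degree bound}, so $G$ is a path or a cycle; and conversely $\vp(C_n) = 2$ (shown above) and $\vp(P_n) = 2$ for any path of length $\geq 2$ (immediate from Corollary~\ref{cor:trees}, as a path has $\ell = 2$ leaves and a non-leaf internal vertex gives $p_x = 2$), while $K_1$ and $K_2$ are excluded since they have $\vp = 1$ by Corollary~\ref{cor:path}. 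One must also note that $P_2 = K_2$ has $\vp = 1$, hence the restriction to paths of length $\geq 2$.

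The only genuine obstacle is the direct verification that $\vp(C_n) = 2$, which I flagged above because the excerpt defers it to this corollary (so I cannot cite Theorem~\ref{thm:Nordhasu Gaddum} circularly). The argument needs a small case split on the parity of $n$ and on small $n$ ($C_3 = K_3$ has $\vp = 2$ as well, consistent with the statement), but it is entirely elementary: a cycle has exactly one or two geodesics between any pair of vertices, and the ``nearer vertex blocks the farther one on the same arc'' observation handles everything. Everything else is a two-line appeal to Lemma~\ref{lem:degree bound} and Corollaries~\ref{cor:path} and~\ref{cor:trees}.
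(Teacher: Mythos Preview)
Your proposal is correct and follows essentially the same route as the paper: use Lemma~\ref{lem:degree bound} to force $\Delta \leq 2$, use Corollary~\ref{cor:path} to separate paths from cycles via $\vp^-$, and verify $\vp(C_n)=2$ directly by a pigeonhole on the two arcs from $x$ (the paper phrases this as the two shortest paths to the antipodal set, but it is the same observation). Your extra check that $\vp(P_n)=2$ for paths of length $\geq 2$ via Corollary~\ref{cor:trees} is a nice touch the paper leaves implicit.
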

\begin{proof}
Let $C_n$ be a cycle for some $n \geq 3$. As $C_n$ is $2$-regular, by Lemma~\ref{lem:degree bound} we have $\vp^- (C_n) \geq 2$. We now show that $\vp (C_n) \leq 2$. Let $x$ be any vertex of $C_n$ and suppose that there exists an $x$-position set $S_x$ of $C_n$ of order $\geq 3$; we can assume that $x \not \in S_x$. Let $A$ be the set of antipodal vertices of $x$, i.e. the vertices of $C_n$ at distance $\lfloor \frac{n}{2} \rfloor $ from $x$. We have $|A| = 1$ if $n$ is even and $|A| = 2$ if $n$ is odd. Let $P_1$ and $P_2$ be the two shortest paths from $x$ to $A$.  Then one of $P_1$ and $P_2$ (say $P_2$) contains distinct vertices $u,v \in S_x$. Hence either $u$ is on the $v,x$-geodesic or $v$ is on the $u,x$-geodesic, a contradiction. Hence $|S_x| \leq 2$ and, since $x$ is an arbitrary vertex of $C_n$, we have $\vp ^-(C_n) = \vp (C_n) = 2$. 

Conversely, suppose that $\vp(G) = 2$; it follows from Lemma~\ref{lem:degree bound} that $G$ has maximum degree $\Delta = 2$, so that $G$ is either a path or a cycle. As $\vp^-(P_{\ell }) = 1$ for paths by Corollary~\ref{cor:path}, it follows that if $\vp^-(G) = 2$, then $G$ is a cycle. 
\end{proof}
Now we characterise some graphs with very large vertex position number.

\begin{lemma}\label{lem:vertex position n-2}
A vertex $u$ of a connected graph $G$ with order $n$ has $p_u(G) = n-2$ if and only if either i) $u$ has degree $d(u) = n-2$, or ii) $u$ has a neighbour $v$ such that $v$ is a cutvertex of $G$ and $\{ u,v\} $ dominates $G$, in which case the unique largest $p_u$-set of $G$ is $S_u = V(G) \setminus \{ u,v\} $.
\end{lemma}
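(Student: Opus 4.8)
The plan is to prove both directions separately. For the ``if'' direction, suppose first that $d(u) = n-2$; then by Lemma~\ref{lem:eccentricity upper bound} we have $p_u(G) \leq n - \e(u)$, and since $\e(u) \geq 2$ (as $u$ is not universal) we get $p_u(G) \leq n-2$, while $N(u)$ is a $u$-position set of order $n-2$ by Lemma~\ref{lem:degree bound}, so $p_u(G) = n-2$. Now suppose instead that $u$ has a neighbour $v$ that is a cutvertex with $\{u,v\}$ dominating $G$. Since $v$ is a cutvertex and $u \in N(v)$, the vertex $u$ lies in one component, say $C_1$, of $G - v$; every vertex of $V(G) \setminus \{u,v\}$ either lies in $C_1$ and is then adjacent to $u$ (by domination, as it is not adjacent to $v$ unless... ) — more carefully, each $w \in V(G)\setminus\{u,v\}$ is dominated by $u$ or $v$, and I claim every such $w$ is at distance $\leq 2$ from $u$ with the only $u,w$-geodesics avoiding $V(G)\setminus\{u,v\}$: if $w \sim u$ this is immediate, and if $w \sim v$ but $w \not\sim u$ then $w$ lies in some component $C_j$ of $G - v$, every $u,w$-path passes through $v$, so $u,v,w$ is a geodesic containing no third vertex of $V(G)\setminus\{u,v\}$. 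Hence $S_u = V(G)\setminus\{u,v\}$ is a $u$-position set of order $n-2$, so $p_u(G) \geq n-2$; combined with $p_u(G) \leq n - \e(u) \leq n-2$ (here $\e(u) \geq 2$ since $u$ is not universal, as $v$ being a cutvertex forces $|V(G)| \geq 3$ and $u \not\sim v'$ for $v'$ in a far component), we get equality, and $S_u$ is the unique such set because any $u$-position set of order $n-2$ omits exactly one vertex besides $u$, and that vertex must be $v$ (any other omission would force $u \sim w$ for all remaining $w$, contradicting that $v$ separates).

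For the ``only if'' direction, assume $p_u(G) = n-2$ and $d(u) \leq n-3$ (so case (i) fails); I must produce the cutvertex $v$. By Lemma~\ref{lem:eccentricity upper bound}, $\e(u) = 2$. Let $S_u$ be a largest $u$-position set, $|S_u| = n-2$, so $V(G) = \{u\} \cup S_u \cup \{v\}$ for a unique vertex $v \notin S_u \cup \{u\}$. Every vertex $w \in S_u$ is at distance $1$ or $2$ from $u$. I will argue that $v$ is adjacent to $u$: since $\e(u) = 2$ and $S_u$ is a $u$-position set, no $u,w$-geodesic for $w \in S_u$ passes through another vertex of $S_u$, so every such geodesic of length $2$ has middle vertex $v$; since $d(u) \leq n-3 < n-2 = |S_u|$, at least one $w \in S_u$ is at distance $2$, forcing $u \sim v$. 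Now for every $w \in S_u$ with $d(u,w) = 2$, all $u,w$-geodesics pass through $v$, meaning every neighbour of $w$ in $N(u)$... — more directly, $w$'s only common-path route to $u$ is via $v$, so $w \notin N(u)$ and $w$'s every shortest path to $u$ uses $v$; this is exactly the condition that separating out $v$ disconnects $w$ from $u$, giving that $v$ is a cutvertex. Finally $\{u,v\}$ dominates $G$: any $w \in S_u$ with $d(u,w) = 1$ is dominated by $u$, and any $w \in S_u$ with $d(u,w)=2$ is dominated by $v$ (its geodesic to $u$ passes through $v$, so $w \sim v$).

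The main obstacle I anticipate is the step asserting $v$ is a cutvertex: having ``every $u,w$-geodesic passes through $v$ for the distance-$2$ vertices'' does not by itself separate $G$, since the distance-$1$ vertices in $N(u)$ might connect the distance-$2$ part back to $u$ without passing through $v$. The resolution should use that $S_u$ is a $u$-position set more forcefully: if $w$ is at distance $2$ and $w' \in N(u) \cap N(w)$ with $w' \in S_u$, then $u, w', w$ would be a geodesic through $w' \in S_u$, contradiction — so the only common neighbour of $u$ and $w$ outside $S_u$ is $v$ itself, and since $w$'s neighbours toward $u$ must route through a common neighbour of $u$, every path is forced through $v$; formalizing that this yields a genuine graph separation (i.e. choosing the component of $G - v$ containing the distance-$2$ vertices and checking it is disjoint from $N(u) \cap S_u$) will require a short but careful argument, likely by noting that any vertex in the same component as such a $w$ after deleting $v$ would itself be at distance $\leq 2$ from $u$ via a $v$-avoiding path, contradicting $\e(u)=2$ together with the position-set property.
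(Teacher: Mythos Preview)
Your ``only if'' direction is sound and close in spirit to the paper's argument. Where the paper first shows that every vertex of $N^{2}(u)$ has exactly one neighbour in $N(u)$ and then that all these neighbours coincide, you instead fix a maximum $u$-position set $S_u$ of size $n-2$, name the single omitted vertex $v$, and read the structure off from there; the two routes reach the same conclusion. The cutvertex step you worry about is easy to finish: once you have $N(w)\cap N(u)=\{v\}$ for every $w\in N^{2}(u)$, every edge between $N(u)$ and $N^{2}(u)$ is incident with $v$, so deleting $v$ separates $N^{2}(u)$ (non-empty since $d(u)\le n-3$) from $\{u\}\cup(N(u)\setminus\{v\})$.

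Your ``if'' direction for case (ii), however, has a genuine gap that cannot be repaired under the hypothesis as written. You assert that if $w\not\sim u$ then $w$ lies in a component of $G-v$ different from that of $u$, so that every $u,w$-path passes through $v$; but ``$v$ is a cutvertex and $\{u,v\}$ dominates $G$'' does not force this. Take $V(G)=\{u,v,a,b,c\}$ with edges $uv,\ ua,\ ab,\ vb,\ vc$: here $v$ is a cutvertex (its removal isolates $c$), $u\sim v$, and $\{u,v\}$ dominates, yet $b\in N^{2}(u)$ also has $a\in N(u)$ as a neighbour, so $u,a,b$ is a geodesic through $a$ and $V(G)\setminus\{u,v\}=\{a,b,c\}$ is \emph{not} a $u$-position set; in fact $p_u(G)=2<3=n-2$. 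The paper itself does not argue the converse from condition~(ii) as stated: its final sentence (``Furthermore in any such graph \dots'') refers to the stronger structure just derived, namely that $v$ is the \emph{unique} neighbour in $N(u)$ of every vertex of $N^{2}(u)$, and that is the hypothesis the converse actually needs.
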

\begin{proof}
Let $G$ be a graph with order $n$ and let $u \in V(G)$ satisfy $p_u(G) = n-2$, with $S_u$ a largest $u$-position set. If $u$ is a universal vertex, then by Corollary~\ref{cor:position num n-1} we would have $p_u(G) = n-1$, so $u$ has degree $d(u) \leq n-2$. Hence by Lemma~\ref{lem:eccentricity upper bound} $u$ has eccentricity two. If $d(u) = n-2$, then $N(u)$ is a $u$-position set, so we can assume that $d(u) \leq n-3$. Hence $V(G) = \{ u\} \cup N(u) \cup N^2(u)$ and $|N^2(u)| \geq 2$.  

Let $x,y$ be any vertices in $N^2(u)$. Suppose that one of these vertices, say $x$, has at least two common neighbours with $u$. If $x \in S_u$, then we must have $N(x) \cap N(u) \cap S_u = \emptyset $; however, this contradicts $u \not \in S_u$, implying that $x \not \in S_u$ and $S_u = V(G) \setminus \{ u,x\}$, so that $N(u) \subseteq S_u$. As $y \in N^2(u) \cap S_u$ and has a neighbour in $N(u)$, this is also a contradiction. Therefore every vertex in $N^2(u)$ has just one neighbour in $N(u)$. 

Suppose that $N(u) \cap N(x) = \{ v\} $ and $N(u) \cap N(y) = \{ v'\} $, where $v \not = v'$. Then $|S_u \cap \{ v,x\} | \leq 1$ and $|S_u \cap \{ v',y\} | \leq 1$, which, together with $u$, accounts for at least three vertices missing from $S_u$. Thus we must have $v = v'$ and there is a vertex $v \in N(u)$ that is the unique neighbour in $N(u)$ of every vertex in $N^2(u)$. Hence $\{ u,v\} $ dominates $G$. Furthermore in any such graph $V(G) \setminus \{ u,v\} $ is a $u$-position set, so that $\vp(G) = n-2$. 
\end{proof}

\begin{theorem}\label{thm:K_2,2,2,2}
For $n \geq 4$, a graph $G$ with order $n$ satisfies $\vp ^-(G) = \vp (G) = n-2$ if and only if $G$ is isomorphic to an even clique with a perfect matching deleted, i.e. if and only if $G \cong K_{2,2,\dots ,2}$.
\end{theorem}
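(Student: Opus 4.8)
The plan is to prove both directions of the equivalence. The easy direction is to verify that $G \cong K_{2,2,\dots,2}$ (the cocktail party graph on $n/2$ parts of size $2$, equivalently $K_n$ minus a perfect matching) satisfies $\vp^-(G) = \vp(G) = n-2$. This follows immediately from Theorem~\ref{thm:multipartite graphs}: with all $n_i = 2$ and $r = n/2$, we get $p_x(K_{2,2,\dots,2}) = \max\{n - 2, 2 - 1\} = n-2$ for every vertex $x$, so both $\vp^-$ and $\vp$ equal $n-2$. Note this forces $n$ to be even, which is consistent with the statement's phrasing ``even clique with a perfect matching deleted''.

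For the forward direction, suppose $\vp^-(G) = \vp(G) = n-2$. Since $\vp(G) = n-2 < n-1$, Corollary~\ref{cor:position num n-1} tells us $G$ has no universal vertex, so $\delta(G) \leq \Delta(G) \leq n-2$. The key step is to apply Lemma~\ref{lem:vertex position n-2} to \emph{every} vertex $u$, since $p_u(G) = \vp^-(G) = n-2$ for all $u$. So each vertex $u$ either (i) has degree exactly $n-2$, or (ii) has a neighbour $v$ that is a cutvertex with $\{u,v\}$ dominating $G$. I would first rule out case (ii) entirely: if some vertex had a cutvertex neighbour $v$, then $v$ separates $G$ into components $C_1, C_2$; but then taking a vertex $w$ in the ``small side'' far from $v$, one checks that $p_w(G)$ would be forced too low — or more directly, a cutvertex $v$ in a graph of diameter $2$ (which $G$ has, by Lemma~\ref{lem:eccentricity upper bound} applied to a vertex of degree $n-2$, if such exists) is highly constrained. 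Actually the cleaner route: case (ii) requires $v$ to be a cutvertex, but if every vertex falls only in case (i) we are done immediately, so it suffices to show case (ii) cannot occur for any vertex. If $v$ is a cutvertex with components $C_1 \ni u$ and $C_2$, pick $u' \in C_2$; then applying Lemma~\ref{lem:vertex position n-2} to $u'$ and using that any largest $p_{u'}$-set omits $u'$ and its cutvertex-neighbour, one derives that $C_2$ must be trivial, and chasing the domination condition yields a contradiction with $\vp^- = n-2$ unless $G$ is small, handled separately.

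Once case (ii) is eliminated, every vertex of $G$ has degree exactly $n-2$, i.e. $G$ is $(n-2)$-regular, equivalently $\overline{G}$ is $1$-regular, so $\overline{G}$ is a perfect matching and $G \cong K_{2,2,\dots,2}$; this also re-confirms $n$ is even. The main obstacle I anticipate is the careful elimination of case (ii) of Lemma~\ref{lem:vertex position n-2}: one must show that a cutvertex neighbour is incompatible with \emph{all} vertices simultaneously achieving $p_x = n-2$, which requires looking at vertices on the far side of the cut and tracking how the domination-by-$\{u,v\}$ constraint propagates; small cases ($n = 4$, $n=5$) may need to be checked by hand. The rest is routine once the regularity is established.
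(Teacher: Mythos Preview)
Your overall strategy is exactly the paper's: apply Lemma~\ref{lem:vertex position n-2} to every vertex, eliminate alternative~(ii), and conclude that $G$ is $(n-2)$-regular so that $\overline{G}$ is a perfect matching. The easy direction via Theorem~\ref{thm:multipartite graphs} is fine and equivalent to the paper's direct verification.

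The one place where you are vague---and where the paper is noticeably sharper---is the elimination of case~(ii). You propose to pick a vertex on the far side of the cut, re-apply Lemma~\ref{lem:vertex position n-2}, and chase the domination condition, anticipating complications and possible small-$n$ checks. The paper sidesteps all of this with a single eccentricity observation. Suppose some $u$ falls into case~(ii), with cutvertex neighbour $v$ and $\{u,v\}$ dominating. From the proof of Lemma~\ref{lem:vertex position n-2}, every vertex of $N^2(u)$ has $v$ as its \emph{unique} neighbour in $N(u)$, and $u\sim v$. Since $\vp(G)=n-2$ there is no universal vertex; as $u$ and all of $N^2(u)$ are already adjacent to $v$, the non-neighbour of $v$ must lie in $N(u)\setminus\{v\}$, giving some $w\in N(u)$ with $w\not\sim v$. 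Now for any $x\in N^2(u)$ one has $d(x,w)\geq 3$ (the only way from $x$ into $N(u)$ is through $v$, and $v\not\sim w$), so $\e(x)\geq 3$ and Lemma~\ref{lem:eccentricity upper bound} gives $p_x(G)\leq n-3$, contradicting $\vp^-(G)=n-2$. This disposes of case~(ii) in two lines, with no component-chasing and no separate treatment of small $n$. Your route through ``$C_2$ must be trivial'' can also be made to work (it ultimately forces $v$ to be universal rather than making $C_2$ trivial), but the eccentricity argument is the cleaner way through.
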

\begin{proof}
Assume that $G$ is a graph such that $p_u(G) = n-2$ for every $u \in V(G)$. Suppose that $G$ contains a vertex $u$ with degree $\leq n-3$, so that by Lemma~\ref{lem:vertex position n-2} $u$ has eccentricity two and $u$ has a neighbour $v$ that is a cutvertex of $G$ and $\{ u,v\} $ is a dominating set of $G$. As $\vp (G) = n-2$, $G$ contains no universal vertex, so that there is a neighbour $w$ of $u$ such that $v \not \sim w$. Hence if $x \in N^2(u)$ we have $d(x,w) \geq 3$ and $x$ has eccentricity at least three, so that by Lemma~\ref{lem:eccentricity upper bound} we have $p_x(G) \leq n-3$, a contradiction. Therefore every vertex of $G$ has degree $n-2$ and $G$ is isomorphic to a $K_{2r}$ with a perfect matching deleted. Conversely in such a graph every vertex has vertex position number $n-2$. 
\end{proof}

\begin{theorem}
For $n \geq 4$, a graph $G$ has $\vp ^-(G) = n-2$ and $\vp (G) = n-1$ if and only if i) $G$ is isomorphic to a clique with a non-empty, non-perfect matching deleted or ii) $G$ is the join of $K_1$ with a disjoint union of cliques.
\end{theorem}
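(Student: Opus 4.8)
The plan is to work from the two characterisations already established. By Lemma~\ref{lem:vertex position n-2} and Corollary~\ref{cor:position num n-1}, the condition $\vp^-(G)=n-2$ forces \emph{every} vertex $u$ to satisfy either (a) $d(u)=n-2$, or (b) $u$ has a neighbour $v$ that is a cutvertex with $\{u,v\}$ dominating $G$; meanwhile $\vp(G)=n-1$ forces the existence of at least one universal vertex $w$. I would split into two cases according to whether $G$ has a vertex of type (b).

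First I would handle the case where every vertex has degree exactly $n-2$ (i.e. no vertex is of type (b), but one vertex $w$ is universal). A universal vertex has degree $n-1$, contradicting ``every vertex has degree $n-2$'' unless we are more careful: the correct reading is that each vertex satisfies (a) \emph{or} is universal. So in this first case every non-universal vertex has degree $n-2$. Let $U$ be the (non-empty) set of universal vertices and $W=V(G)\setminus U$. Each vertex of $W$ is non-adjacent to exactly one other vertex, which must also lie in $W$ (since universal vertices are adjacent to everything); hence the non-edges of $G$ form a matching on $W$, and this matching is non-empty (else $G=K_n$ and $\vp^-=n-1$) and non-perfect (else $\vp^-(G)=\vp(G)=n-2$ by Theorem~\ref{thm:K_2,2,2,2}, contradicting $\vp(G)=n-1$). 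Conversely, I would check that a clique minus a non-empty non-perfect matching has a universal vertex (any unmatched vertex), giving $\vp=n-1$, and that every matched vertex $u$ has degree $n-2$, giving $p_u(G)=n-2$; since the matching is non-perfect there is an unmatched vertex, so $\vp^-(G)=n-2$. This yields conclusion (i).

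Next I would treat the case where some vertex $u_0$ is of type (b): it has a cutvertex neighbour $v_0$ with $\{u_0,v_0\}$ dominating $G$, and $u_0$ is not universal (so $d(u_0)\le n-3$, in particular $n\ge5$; the case $n=4$ I would dispatch separately by noting $K_{2,2}$ is the only graph with a cutvertex-free obstruction, or rather by direct inspection). Let $C_1,\dots,C_k$ ($k\ge2$) be the components of $G-v_0$, with $u_0\in C_1$. Every vertex of $C_i$ for $i\ge2$ is non-adjacent to $u_0$ hence lies in $N^2(u_0)$, and by the domination condition every such vertex is adjacent to $v_0$; thus $v_0$ is universal. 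Since $\vp(G)=n-1$ there is indeed a universal vertex, and I claim it must be $v_0$ (or behave like it): I would argue that any component $C_i$, $i\ge2$, must itself be a clique — if $C_i$ contained two non-adjacent vertices $a,b$, then $a$ would have eccentricity $\ge 3$ (a path to $C_1$ goes through $v_0$, and $b$ would be at distance $2$ from $a$ but a vertex of $C_1\setminus\{v_0\}$ non-adjacent to $a$... ) forcing $p_a(G)\le n-3$ by Lemma~\ref{lem:eccentricity upper bound}, a contradiction; the same argument applies to $C_1\setminus\{v_0\}$. Hence $G-v_0$ is a disjoint union of cliques and $v_0$ is joined to all of them, i.e. $G=K_1\vee(\text{disjoint union of cliques})$. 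Conversely, for such a graph I would verify $v_0$ is universal so $\vp(G)=n-1$, and that any vertex $u$ in a clique block has $\{u,v_0\}$ dominating with $v_0$ a cutvertex (as long as there are at least two blocks, which holds since otherwise $G=K_n$), so $p_u(G)=n-2$ by Lemma~\ref{lem:vertex position n-2}; thus $\vp^-(G)=n-2$, giving conclusion (ii).

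The main obstacle I anticipate is the forward direction in the type-(b) case: pinning down that each component of $G-v_0$ is a clique and that there is no additional structure. The delicate point is the eccentricity argument — one must check carefully that a non-adjacent pair inside a component really does create a vertex of eccentricity $\ge 3$, using that $v_0$ is a cutvertex so all cross-component paths funnel through $v_0$, and that within a component a shortest path between non-adjacent vertices has length exactly $2$ only if they have a common neighbour, which need not be the case a priori. I would also need to rule out overlap/degeneracy between cases (i) and (ii): a clique minus a non-perfect matching never has a cutvertex, while $K_1\vee(\text{union of }\ge2\text{ cliques})$ does, so the two families are genuinely disjoint except possibly for small $n$, which the $n\ge4$ hypothesis and direct checking of $n=4$ will settle.
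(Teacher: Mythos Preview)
Your overall architecture matches the paper's: split on whether every non-universal vertex has degree exactly $n-2$ (yielding case (i)) or some vertex $u_0$ has degree $\le n-3$ and hence falls under clause (ii) of Lemma~\ref{lem:vertex position n-2} (yielding case (ii)). The treatment of case (i) is fine.

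In case (ii), however, your proposed eccentricity argument for showing that each component of $G-v_0$ is a clique cannot work, and you are right to flag it as the obstacle. Once you have established that $v_0$ is universal (and your argument for \emph{that} is also incomplete: domination by $\{u_0,v_0\}$ only gives $v_0$ adjacent to $N^2(u_0)$, not to all of $N(u_0)$; the paper closes this using the fact, buried in the proof of Lemma~\ref{lem:vertex position n-2}, that every vertex of $N^2(u_0)$ has $v_0$ as its \emph{unique} neighbour in $N(u_0)$, together with $\e(x)\le 2$ for $x\in N^2(u_0)$), the graph has diameter $2$. So a non-adjacent pair $a,b$ inside a component of $G-v_0$ has $d(a,b)=2$ via $v_0$, and no vertex acquires eccentricity $\ge 3$; Lemma~\ref{lem:eccentricity upper bound} then only gives $p_a(G)\le n-2$, which is not a contradiction.

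The paper's device is to exploit the \emph{uniqueness} clause in Lemma~\ref{lem:vertex position n-2}(ii): for every $w\neq v_0$ one argues that the unique $p_w$-set of size $n-2$ is $S_w=V(G)\setminus\{w,v_0\}$ (ruling out $v_0\in S_w$ because then $S_w$ would have to omit the entire component of $G-v_0$ not containing $w$). Now take an induced path $w_1,w_2,w_3$ inside a component of $G-v_0$; since $d(w_1,w_3)=2$, the vertex $w_2$ lies on a $w_1,w_3$-geodesic, so $w_2$ and $w_3$ cannot both lie in $S_{w_1}$, contradicting $S_{w_1}=V(G)\setminus\{w_1,v_0\}$. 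This is the missing idea: work with the explicit $p_w$-sets rather than eccentricity.
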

\begin{proof}
Let $G$ be a graph with $\vp ^-(G) = n-2$ and $\vp (G) = n-1$. We can assume that $G$ contains $r \geq 1$ universal vertices as well as at least two vertices with degree $\leq n-2$. If every vertex has degree either $n-1$ or $n-2$, then $G$ is isomorphic to a clique with a matching deleted. To avoid the graph having $\vp ^-(G) = \vp(G) = n-1$ the matching is non-empty and to avoid having $\vp ^-(G) = \vp(G) = n-2$ the matching is not perfect by Theorem~\ref{thm:K_2,2,2,2}.

Suppose that $G$ contains a vertex $u$ with $p_u(G) = n-2$ and degree $d(u) \leq n-3$. By Lemma~\ref{lem:vertex position n-2} $u$ has eccentricity two and has a neighbour $v$ that is a cutvertex. By Lemma~\ref{lem:eccentricity upper bound} every vertex of $G$ has eccentricity at most two, so, considering the vertices in $N^2(u)$, we see that $v$ is a universal vertex. As $v$ is a cutvertex, it is the unique universal vertex of $G$, so that every other vertex $w$ of $G$ must have $p_w(G) = n-2$. For any $w \in V(G)\setminus \{ v\} $, let $S_w$ be a $w$-position set of order $n-2$. Suppose that $v \in S_w$. Then $S_w$ cannot contain $w$ or any vertex from a component of $G -v$ not containing $w$; this impossible unless $G \cong P_3$, which has the stated structure, so we can assume that $S_w = V(G) \setminus \{ w,v\} $ for each $w \in V(G) \setminus \{ v\} $. Suppose then that there is a component $W$ of $G-v$ that is not a clique, so that there are vertices $w_1,w_2,w_3$ in $W$ such that $w_1,w_2,w_3$ is a path, but $w_1 \not \sim w_3$. However, this implies that $w_2$ and $w_3$ cannot both belong to a largest $w$-position set $S_{w_1}$, which is a contradiction, since $S_{w_1} = V(G) \setminus \{ w_1,v\} $. Hence the graph must be the join of $K_1$ with a disjoint union of cliques, which is easily verified to have the correct vertex position numbers.
\end{proof}

\section{Computational complexity}\label{Section: complexity}
Given a graph $G$, in this section we show that the vertex position number $p_x(G)$ can be computed in polynomial time for any vertex $x\in V(G)$. In particular, we will show that $p_x(G)$, for each $x\in V(G)$, can be computed as an independent set calculated on a graph obtained as a transformation of $G$.
To this aim we need the following definitions.
\begin{definition}
A graph is a \emph{comparability graph} if the edges connect pairs of elements that are comparable to each other in a partial order.
\end{definition}

\begin{definition}
Given a graph $G$ and a vertex $x\in V(G)$, the \emph{reduced} graph $\widetilde{G}_x$ is the graph on the same vertices $V(G)$ obtained from $G$ by removing all the edges connecting vertices at the same distance from $x$.
\end{definition}

\begin{definition}
Given a graph $G$ and a vertex $x\in V(G)$, the graph $G^*_x$ is the graph on the same vertices $V(G)$ obtained from the reduced graph $\widetilde{G}_x$ by adding an edge between any two vertices of any geodesic to $x$.
\end{definition}

See Figure~\ref{fig:reduced} for a visualisation of $\widetilde{G}_x$ and $G^*_x$, starting from a graph $G$ and a vertex $x$.
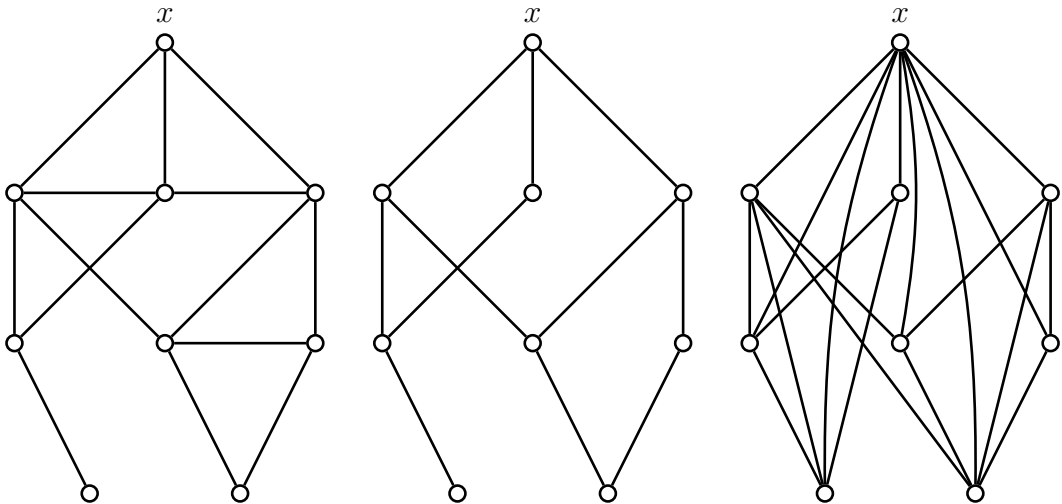
\begin{figure}
\centering
\begin{tikzpicture}
\pgfsetlinewidth{1pt}	
	\vertex (v_1) at (2,6) [label=above:$x$] {}; 
	\vertex (v_2) at (4,4) {};  
	\vertex (v_3) at (4,2) {};
	\vertex (v_4) at (0,2) {};
	\vertex (v_5) at (0,4) {};
	\vertex (v_6) at (2,4) {};
	\vertex (v_7) at (2,2) {};
	\vertex (v_8) at (1,0) {};
	\vertex (v_9) at (3,0) {};
	\path
		(v_1) edge (v_2)
		(v_2) edge (v_6)
		(v_6) edge (v_1)
		(v_1) edge (v_5)
		(v_5) edge (v_6)
		(v_6) edge (v_4)
		(v_4) edge (v_5)
		(v_5) edge (v_7)		
		(v_7) edge (v_2)		
		(v_2) edge (v_3)		
		(v_3) edge (v_7)		
		(v_7) edge (v_9)		
		(v_9) edge (v_3)		
		(v_4) edge (v_8)		
			 ; 
\end{tikzpicture}~~~~~~
\begin{tikzpicture}
\pgfsetlinewidth{1pt}	
	\vertex (v_1) at (2,6) [label=above:$x$] {}; 
	\vertex (v_2) at (4,4) {};  
	\vertex (v_3) at (4,2) {};
	\vertex (v_4) at (0,2) {};
	\vertex (v_5) at (0,4) {};
	\vertex (v_6) at (2,4) {};
	\vertex (v_7) at (2,2) {};
	\vertex (v_8) at (1,0) {};
	\vertex (v_9) at (3,0) {};
	\path
		(v_1) edge (v_2)
		(v_6) edge (v_1)
		(v_1) edge (v_5)
		(v_6) edge (v_4)
		(v_4) edge (v_5)
		(v_5) edge (v_7)		
		(v_7) edge (v_2)		
		(v_2) edge (v_3)		
		
		(v_7) edge (v_9)		
		(v_9) edge (v_3)		
		(v_4) edge (v_8)		
			 ;

\end{tikzpicture}~~~~~~
\begin{tikzpicture}
\pgfsetlinewidth{1pt}	
	\vertex (v_1) at (2,6) [label=above:$x$] {}; 
	\vertex (v_2) at (4,4) {};  
	\vertex (v_3) at (4,2) {};
	\vertex (v_4) at (0,2) {};
	\vertex (v_5) at (0,4) {};
	\vertex (v_6) at (2,4) {};
	\vertex (v_7) at (2,2) {};
	\vertex (v_8) at (1,0) {};
	\vertex (v_9) at (3,0) {};
	\path
		(v_1) edge (v_2)
		(v_6) edge (v_1)
		(v_1) edge (v_5)
		(v_6) edge (v_4)
		(v_4) edge (v_5)
		(v_5) edge (v_7)		
		(v_7) edge (v_2)		
		(v_2) edge (v_3)		
		
		(v_7) edge (v_9)		
		(v_9) edge (v_3)		
		(v_4) edge (v_8)		
			 ; 

	\path
		(v_8) edge (v_5)
		(v_8) edge (v_6)
		(v_9) edge (v_5)
		(v_9) edge (v_2)
		(v_1) edge (v_3)
		(v_1) edge (v_4)
		(v_1) edge [bend left=10] (v_7)
		(v_1) edge [bend right=10] (v_8)
		(v_1) edge [bend left=10] (v_9)
			 ; 
    
\end{tikzpicture}
\caption{~\label{fig:reduced}From left to right: a graph $G$ with a vertex $x$, the graph $\widetilde{G}_x$, and the graph $G^*_x$. 
The graphs $G$ and $\widetilde{G}_x$ are drawn by placing the vertices at the same distance from $x$ on a common horizontal level.}
\end{figure}

\begin{lemma}
Given a graph $G$ and a vertex $x\in V(G)$, $G^*_x$ is a comparability graph.
\end{lemma}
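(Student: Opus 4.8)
The plan is to exhibit an explicit partial order on $V(G)$ whose comparability graph is exactly $G^*_x$. The natural candidate is the relation "$u \preceq v$ if and only if $u$ lies on some $x,v$-geodesic", together with its symmetric closure as edges. First I would verify that $\preceq$ is genuinely a partial order. Reflexivity is immediate, and antisymmetry follows because $u \preceq v$ forces $d(x,u) \le d(x,v)$ (a vertex on an $x,v$-geodesic is no farther from $x$ than $v$), so $u \preceq v \preceq u$ gives $d(x,u) = d(x,v)$, and then both vertices lie on a common geodesic of a fixed length, which is only possible if $u = v$. The main content is transitivity: if $u$ lies on an $x,v$-geodesic and $v$ lies on an $x,w$-geodesic, I would concatenate the $x,u$-portion of the first geodesic with the $v,w$-portion of the second (passing through $v$), and check via the additivity of distances along geodesics that the resulting $x,w$-walk has length $d(x,u) + (d(x,w) - d(x,v)) = d(x,w)$, hence is an $x,w$-geodesic through $u$; so $u \preceq w$.

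Next I would show that two vertices $u,v$ are comparable under $\preceq$ if and only if $uv \in E(G^*_x)$. For the forward direction, if $u \preceq v$ with $u \ne v$, then $u$ lies on an $x,v$-geodesic $P$; every pair of vertices on $P$ is joined by an edge in $G^*_x$ by the construction (we add an edge between any two vertices of a geodesic to $x$), so in particular $uv \in E(G^*_x)$. For the converse I would trace through the two ways an edge of $G^*_x$ can arise. An edge of $\widetilde{G}_x$ joins two vertices $u,v$ with $d(x,u) \ne d(x,v)$, say $d(x,v) = d(x,u)+1$; then appending the edge $uv$ to an $x,u$-geodesic yields an $x,v$-geodesic through $u$, so $u \preceq v$. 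An edge added in the second step joins two vertices lying on a common $x$-geodesic $Q$; the one nearer to $x$ then lies on the $x$-to-(farther vertex) sub-geodesic of $Q$, giving comparability. Combining both cases shows every edge of $G^*_x$ corresponds to a comparable pair.

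The step I expect to require the most care is transitivity, specifically the claim that geodesic segments can be glued: one must be sure that a shortest $x,u$-path followed by a shortest $v,w$-path along a geodesic through $v$ does not "backtrack" or create a shorter route, which is where the identity $d(x,u) + d(v,w) = d(x,w)$ (equivalently $d(x,v) = d(x,u) + d(u,v)$ and $d(x,w) = d(x,v) + d(v,w)$ holding simultaneously along nested geodesics) does the work. A minor subtlety is that $\widetilde{G}_x$ may contain edges $uv$ with $|d(x,u)-d(x,v)| = 1$ that do not lie on any $x$-geodesic through both endpoints in the "added edges" sense, but as noted above such an edge still directly witnesses $u \preceq v$, so no edge of $G^*_x$ is left unaccounted for. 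Once the correspondence between comparable pairs and edges is established, $G^*_x$ is by definition the comparability graph of $(V(G), \preceq)$, completing the proof.
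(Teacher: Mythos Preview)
Your approach is the same as the paper's: define $u<v$ when $u$ and $v$ lie on a common geodesic to $x$ with $d(x,u)<d(x,v)$, and identify edges of $G^*_x$ with comparable pairs. The paper's proof is in fact much terser than yours---it neither checks transitivity nor the edge-to-comparability direction---so your added rigour is welcome.

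One computational slip to fix in the transitivity step: concatenating the $x,u$-portion of the first geodesic with the $v,w$-portion of the second does not give a connected walk, and the displayed length $d(x,u)+(d(x,w)-d(x,v))$ does not equal $d(x,w)$ unless $u=v$. What you want is to take the \emph{entire} first $x,v$-geodesic (which already passes through $u$) and append the $v,w$-segment of the second; the length is then $d(x,v)+d(v,w)=d(x,w)$, so this is an $x,w$-geodesic through $u$. Your parenthetical identities $d(x,v)=d(x,u)+d(u,v)$ and $d(x,w)=d(x,v)+d(v,w)$ are exactly right, but they combine to $d(x,w)=d(x,u)+d(u,v)+d(v,w)$, not to $d(x,u)+d(v,w)$.
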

\begin{proof}
The partial order underlying the graph  $G^*_x$ consists of its vertices and, by definition, two vertices $u,v\in V(G^*_x)$  are such that $u < v$ if $u$ and $v$ are on the same geodesic to $x$ and $d(u,x)<d(v,x)$. Since $G^*_x$ has been built from $\widetilde{G}_x$ by adding an edge between every pair of vertices on each geodesic to $x$, then any two comparable vertices are adjacent and hence $G^*_x$ is a comparability graph.
\end{proof}

\begin{lemma}\label{lem:equiv}
Given a graph $G$ and a vertex $x\in V(G)$, $S_x$ is an $x$-position set of $G$ if and only if $S_x$ is an $x$-position set for $\widetilde{G}_x$. Then $p_x(G)=p_x(\widetilde{G}_x)$.
\end{lemma}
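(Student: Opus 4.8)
The plan is to prove the two directions of the equivalence separately, and then the equality of the position numbers follows immediately since the two graphs have the same vertex set and hence the same candidate sets $S_x$. The crucial observation to establish first is that $G$ and $\widetilde{G}_x$ have \emph{exactly the same $x$-geodesics}, and more generally the same distance-from-$x$ function. Indeed, $\widetilde{G}_x$ is obtained from $G$ only by deleting edges joining vertices in the same level set $V_t = \{ v : d_G(x,v) = t\}$. No such edge can lie on an $x$-geodesic in $G$ (an $x$-geodesic passes through the level sets $V_0, V_1, V_2, \dots$ consecutively, using only edges between consecutive levels), so deleting them removes no $x$-geodesic; conversely, deleting edges cannot create shortcuts, so no new shorter paths appear. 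A short induction on $t$ then shows $d_{\widetilde{G}_x}(x,v) = d_G(x,v)$ for every $v$, and that a path is an $x$-geodesic in $\widetilde{G}_x$ if and only if it is one in $G$.

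Granting this, the equivalence is essentially a tautology. Suppose $S_x$ is an $x$-position set of $G$ and assume for contradiction it is not an $x$-position set of $\widetilde{G}_x$: then some vertex $z \in S_x \setminus \{y\}$ lies on some $x,y$-geodesic $P$ in $\widetilde{G}_x$ for some $y \in S_x$. But $P$ is also an $x,y$-geodesic in $G$ by the observation above, contradicting that $S_x$ is an $x$-position set of $G$. The reverse implication is identical with the roles of $G$ and $\widetilde{G}_x$ swapped. Since an $x$-position set of $G$ is the same thing as an $x$-position set of $\widetilde{G}_x$, the maximum cardinalities coincide, giving $p_x(G) = p_x(\widetilde{G}_x)$.

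The only real content, and hence the main (mild) obstacle, is the claim that deleting within-level edges changes neither distances from $x$ nor the family of $x$-geodesics. I would phrase this carefully: first note $d_{\widetilde{G}_x}(x,v) \geq d_G(x,v)$ trivially since $\widetilde{G}_x$ is a subgraph of $G$; then prove $d_{\widetilde{G}_x}(x,v) \leq d_G(x,v)$ by induction on $d_G(x,v)$, using that any $v \in V_t$ with $t \geq 1$ has a neighbour in $V_{t-1}$ in $G$ (take the penultimate vertex of a shortest $x,v$-path), and that this edge survives in $\widetilde{G}_x$ since it joins different levels. Once distances agree, an $x$-geodesic in either graph is precisely a path $x = u_0, u_1, \dots, u_k = v$ with $u_i \in V_i$ for all $i$, and every edge $u_{i-1}u_i$ of such a path is a between-levels edge, hence present in $\widetilde{G}_x$; this gives the two-way correspondence of geodesics and completes the argument.
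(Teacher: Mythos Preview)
Your proof is correct and follows essentially the same approach as the paper's: both arguments hinge on the observation that the $x$-geodesics in $G$ and in $\widetilde{G}_x$ coincide, after which the equivalence of $x$-position sets is immediate. Your version is simply more careful in justifying that key observation (via the induction on levels), whereas the paper asserts it in one line.
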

\begin{proof}
Note that any geodesic to $x$ in $G$ is also a geodesic to $x$ in $\widetilde{G}_x$. Then  $S_x$ is a $x$-position set for $G$ if and only if there are no two vertices on the same geodesic to $x$ in $G$, that is, if and only if there are no two vertices on the same geodesic to $x$ in $\widetilde{G}_x$. Hence,  if and only if $S_x$ is a $x$-position set for $\widetilde{G}_x$. As consequence, any maximum vertex position set of $G_x$ is a maximum vertex position set for $\widetilde{G}_x$. Then $p_x(G)=p_x(\widetilde{G}_x)$.
\end{proof}

Given a graph $G$, let us denote the graph induced by vertices in $V(G)\setminus \{x\}$ as $G - x$.
\begin{lemma}\label{lem:is}
Given a graph $G$ and a vertex $x\in V(G)$, $S_x$ is an $x$-position set of $G$ if and only if $S_x$ is an independent set of $G^*_x - x$. Then $p_x(G)=\alpha(G^*_x - x)$.
\end{lemma}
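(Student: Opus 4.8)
The plan is to prove the equivalence in two directions and then read off the count. By Lemma~\ref{lem:equiv} we may work with $\widetilde{G}_x$ in place of $G$, since $p_x(G) = p_x(\widetilde{G}_x)$ and the two graphs have the same $x$-position sets; this is convenient because $G^*_x$ is obtained from $\widetilde{G}_x$ purely by adding edges along geodesics to $x$. Throughout, the key observation to isolate first is: two distinct vertices $u,v \in V(G)\setminus\{x\}$ are adjacent in $G^*_x$ if and only if they lie together on some geodesic to $x$ in $G$ (equivalently in $\widetilde{G}_x$). One inclusion of this observation is immediate from the construction of $G^*_x$; the other requires checking that the edges of $\widetilde{G}_x$ between consecutive levels, together with the newly added geodesic-chords, do not accidentally create an edge between two vertices that do \emph{not} share a geodesic to $x$ — but any edge $uv$ of $\widetilde{G}_x$ joins vertices at consecutive distances $d(u,x)=d(v,x)-1$, and then appending a $u,x$-geodesic to the edge $vu$ gives a $v,x$-geodesic through $u$, so every edge of $\widetilde G_x$ already lies on a geodesic to $x$.

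Granting that observation, the forward direction runs as follows. Suppose $S_x$ is an $x$-position set of $G$; then $x \notin S_x$ (or, if the trivial set $\{x\}$ is meant, $S_x - x$ is still an independent set of $G^*_x - x$, vacuously), so $S_x \subseteq V(G^*_x - x)$. If $S_x$ were not independent in $G^*_x - x$, there would be distinct $u,v \in S_x$ adjacent in $G^*_x$; by the observation $u$ and $v$ lie on a common geodesic to $x$, say with $d(u,x) < d(v,x)$, and then $u$ lies on an $x,v$-geodesic with $u \in S_x \setminus \{v\}$, contradicting the definition of an $x$-position set. For the converse, suppose $S_x$ is independent in $G^*_x - x$ but is not an $x$-position set of $G$; then some $y \in S_x$ has a vertex $z \in S_x\setminus\{y\}$ on an $x,y$-geodesic, so $z$ and $y$ lie on a common geodesic to $x$, hence $zy \in E(G^*_x)$ with $z,y \in S_x$, contradicting independence. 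Combining the two directions, the $x$-position sets of $G$ are exactly the independent sets of $G^*_x - x$, whence $p_x(G) = \alpha(G^*_x - x)$.

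I expect the only real subtlety — and the place to be careful rather than the place of genuine difficulty — is the "accidental edge'' check in the observation above: one must confirm that passing from $G$ to $\widetilde{G}_x$ and then adding geodesic-chords introduces no edge between a pair of vertices that fails to share a geodesic to $x$. This is handled by the remark that every edge of $\widetilde{G}_x$ already joins vertices at consecutive $x$-distances and hence already lies on an $x$-geodesic, so the chord-adding step only supplements, never corrupts, the "co-geodesic'' relation. A second minor point is the treatment of $x$ itself: the definition of $x$-position set forbids $x$ from any such set of size $\geq 2$ in the connected case, and $G^*_x - x$ removes $x$ outright, so the two notions agree on which subsets of $V(G)\setminus\{x\}$ are admissible. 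With these checked, the final identity $p_x(G)=\alpha(G^*_x - x)$ follows immediately, and — together with Lemma~\ref{lem:is}'s companion results that $G^*_x$ is a comparability graph and that maximum independent sets in comparability (perfect) graphs are polynomially computable — yields the promised polynomial-time algorithm for $p_x(G)$.
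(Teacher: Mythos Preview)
Your proof is correct and follows essentially the same approach as the paper's: both directions proceed by contradiction, reducing adjacency in $G^*_x - x$ to co-membership on a geodesic to $x$, and the equality $p_x(G)=\alpha(G^*_x-x)$ is then immediate. You are in fact slightly more careful than the paper in explicitly verifying that every edge of $\widetilde{G}_x$ (not just the added chords) joins two vertices sharing a geodesic to $x$---a point the paper's proof invokes (``by definition of $G^*_x$'') without spelling out.
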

\begin{proof}
Let $S_x$ an $x$-position set in $G$. By Lemma~\ref{lem:equiv}, $S_x$ is also a $x$-position set in $\widetilde{G}_x$. Assume that $S_x$ is not an independent set in $G^*_x - x$. Then there are two adjacent vertices $u,v\in V(G^*_x - x)\cap S_x$. By definition of $G^*_x$, $u$ and $v$ are on the same geodesic to $x$ in $\widetilde{G}_x$ and then in $G$, a contradiction. 


Assume now that $S_x$ is an independent set of $G_x^* - x$. Let $u\in S_x$ and let $P$ be any $u-x$ geodesic in G. Then it follows from the construction of $G_x^*$ that $u$ is adjacent to all the vertices of $V(P-u)$ in $G_x^*$. This immediately shows that $V(P)\cap S_x=\{u\}$. Consequently,  $S_x$ is a $x$-position set in $G$ and hence $p_x(G)=\alpha(G_x^*-x)$.
\end{proof}

\begin{algorithm}[ht]
\SetKwInput{Proc}{Algorithm A}
\Proc{}
\SetKwInOut{Input}{Input}
\Input{A connected graph $G$, a vertex $x\in V(G)$}
\SetKwInOut{Output}{Output}
\Output{A maximum $x$-position set $S_x$, and $p_x(G)$ 
}
\BlankLine
\BlankLine
Let $D[u] :=d(u,x)$, for each $u\in V(G)$\;\label{line:dist}  
        
\For {each $uv \in E(G)$}{\label{line:rem} 
        \If{$D[u]=D[v]$}{
                   remove $uv$ from $E(G)$\;}   
        }

\For{each $u \in V(G-x)$}{\label{line:add}
Let $Q$ be a queue and $R:=\{u\}$\;\label{line:QR}
$Q.enqueue(u)$\;
\While{$Q$ is not empty  \label{line:ciclo}}
   { $v := Q.dequeue()$\; \label{line:deq}
     \For {each $w$ in $N_G(v)\setminus R$ such that $D[w]>D[v]$}{
        $Q.enqueue(w)$\;
        $R:= R\cup \{w\}$\;
        \If{$D[w]>D[u]+1$}{
               add $uv$ to $E(G)$\; \label{line:uv}
               }    
        }
     }                  
        
    }

Let $S$ a maximum independent set of $G-x$\;  \label{line:is}              
                       
\Return $S$, $|S|$
 
\caption{Algorithm A to compute a maximum $x$-position set $S_x$ of a graph $G$ and $p_x(G)$, for a given $x\in V(G)$.
}
\label{alg:A}
\end{algorithm}

\begin{theorem}\label{theo:comp}
Given a graph $G$ and a vertex $x\in V(G)$, a maximum $x$-position set can be computed in $O(n m \log(n^2/m))$ time, where $n=|V(G^*_x)|$ and $m=|E(G^*_x)|$.
\end{theorem}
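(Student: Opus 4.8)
The plan is to assemble three ingredients: Lemma~\ref{lem:is}, which turns the problem into a maximum independent set computation; the fact that $G^*_x$ is a comparability graph, which lets us pass to antichains in a poset; and Dilworth's theorem together with a fast bipartite matching (equivalently unit-capacity max-flow) routine. The bulk of the argument is complexity bookkeeping.

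First I would run Algorithm~\ref{alg:A} (or equivalently: a breadth-first search from $x$ to record $D[u]=d(u,x)$; deletion of every edge joining two vertices with the same $D$-value, yielding $\widetilde{G}_x$; and, for each vertex $u$, a downward traversal that adds an edge from $u$ to every vertex lying below it on some $u$--$x$ geodesic, yielding $G^*_x$). This preprocessing runs in $O(n(n+m))$ time, which will be dominated by the flow step below. By Lemma~\ref{lem:is} (via Lemma~\ref{lem:equiv}), a set $S_x\subseteq V(G)\setminus\{x\}$ is an $x$-position set of $G$ if and only if it is an independent set of $G^*_x-x$, and $p_x(G)=\alpha(G^*_x-x)$. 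So it suffices to output a maximum independent set of $G^*_x-x$.

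Next I would use the structure of $G^*_x$. Since $G^*_x$ is a comparability graph, so is $G^*_x-x$: restrict the underlying partial order $P$ (with $u<v$ when $u,v$ lie on a common geodesic to $x$ and $d(u,x)<d(v,x)$) to $V(G^*_x)\setminus\{x\}$. Two vertices are adjacent in $G^*_x-x$ exactly when they are comparable in $P$, so independent sets of $G^*_x-x$ are precisely antichains of $P$, and $\alpha(G^*_x-x)$ equals the width of $P$. By Dilworth's theorem the width equals the minimum size of a chain cover of $P$, which is computed from a maximum matching in the bipartite graph $B$ with vertex classes $\{a_v\}$ and $\{b_v\}$, $v\in V(P)$, and an edge $a_u b_v$ whenever $u<_P v$: a matching of size $k$ corresponds to a chain cover using $|V(P)|-k$ chains, and conversely, so a maximum matching gives a minimum chain cover. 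To recover an actual maximum antichain, obtain in linear time a minimum vertex cover $C$ of $B$ from the maximum matching (König's theorem); the set of $v$ with $a_v\notin C$ and $b_v\notin C$ has size $|V(P)|-|C|=\mathrm{width}(P)$ and is an antichain (any $u<_P v$ among such $v$ would leave the edge $a_u b_v$ uncovered), hence is a maximum $x$-position set, which we return.

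Finally, the running-time bound: compute the maximum matching of $B$ as a maximum integral flow in the unit-capacity network obtained by adding a source adjacent to all $a_v$ and a sink adjacent to all $b_v$. This network has $O(n)$ vertices and $O(m)$ edges with $n=|V(G^*_x)|$, $m=|E(G^*_x)|$, and such a flow is found in $O(nm\log(n^2/m))$ time by the Goldberg--Tarjan push--relabel algorithm with the dynamic-tree implementation. As every other step (BFS, construction of $G^*_x$, building $B$, reading off the antichain) runs in $O(n(n+m))$ time, the total is $O(nm\log(n^2/m))$, as claimed. The theorem is essentially an assembly of standard results, so the only point requiring real care is verifying that the construction of $G^*_x$ and the recovery of the antichain from the matching are both dominated by the max-flow step, and citing a max-flow routine with running time exactly $O(nm\log(n^2/m))$.
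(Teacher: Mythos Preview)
Your proposal is correct and follows essentially the same route as the paper: build $G^*_x$ in $O(n^2+nm)$ time, invoke Lemma~\ref{lem:is} to reduce to a maximum independent set in the comparability graph $G^*_x-x$, and solve that in $O(nm\log(n^2/m))$ time. The only difference is cosmetic: where the paper simply cites Golumbic for the independent-set step on comparability graphs, you unpack that black box explicitly (antichains via Dilworth, minimum chain cover via bipartite matching, Goldberg--Tarjan for the matching, K\"onig to extract the antichain), which is a valid and more self-contained way to arrive at the same bound.
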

\begin{proof}
Algorithm A in Figure~\ref{alg:A} compute the distances of each vertex $v\in V(G)$ from $x$ at Line~\ref{line:dist}. This requires $O(n+m)$ time. 
With the loop at Line~\ref{line:rem}, $\widetilde{G}_x$ is computed from $G$ by removing edges between vertices at the same distance from $x$. This requires $O(m)$ time. The loop at Line~\ref{line:add} add edges to the graph in order to build $G^*_x$. This requires $O(n^2+nm)$ time since Lines from~\ref{line:QR} to~\ref{line:uv} codifies for a 
breadth-first visit of the vertices on a geodesic to $x$ passing through a vertex $u$. This visit, based on a queue $Q$ and a set $R$ of the visited vertices, requires $O(n+m)$ time and since it is repeated for each vertex $u$ in $V(G-x)$, the total time is $O(n^2+nm)$.
Finally, at Line~\ref{line:is} an independent set $S$ of the resulting comparability graph $G^*-x$ is computed. According to~\cite{golumbic}, the computation of an independent set for a comparability graph requires $O(n m \log(n^2/m))$. The last step determines the computational time of the whole algorithm. By Lemma~\ref{lem:is}, the set $S$ is also a $x$-position set of $G$, then Algorithm A correctly returns $S$ and its order.
\end{proof}

\begin{corollary}
Given a graph $G$, $\vp^-(G)$ and $\vp(G)$ can be computed in $O(n^4 \log(n))$ time, where $n=V(G)$.

\end{corollary}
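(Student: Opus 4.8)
The plan is to simply run the single-vertex procedure of Theorem~\ref{theo:comp} once for every vertex of $G$ and then take the maximum and the minimum of the resulting values. Recall that $\vp(G) = \max_{x \in V(G)} p_x(G)$ and $\vp^-(G) = \min_{x \in V(G)} p_x(G)$ by Definition~\ref{Def: main definition}, so both quantities are determined once we know $p_x(G)$ for all $x \in V(G)$. By Theorem~\ref{theo:comp}, for a fixed $x$ a maximum $x$-position set, and hence the number $p_x(G) = \alpha(G^*_x - x)$ (Lemma~\ref{lem:is}), can be computed in time $O(n m \log(n^2/m))$, where $n = |V(G^*_x)| = |V(G)|$ and $m = |E(G^*_x)|$.

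The only thing to check is that this per-vertex bound is $O(n^3 \log n)$ uniformly in $x$. Since $G^*_x$ is a simple graph on $n$ vertices we always have $m \leq \binom{n}{2} = O(n^2)$, and since $G$ is connected (so $G^*_x$ has at least one edge) we have $m \geq 1$, whence $\log(n^2/m) \leq \log(n^2) = 2\log n = O(\log n)$. Substituting $m = O(n^2)$ and $\log(n^2/m) = O(\log n)$ into $O(n m \log(n^2/m))$ gives $O(n \cdot n^2 \cdot \log n) = O(n^3 \log n)$ for each individual vertex $x$.

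Carrying out this computation for each of the $n$ choices of $x$ therefore costs $O(n \cdot n^3 \log n) = O(n^4 \log n)$ in total; a final linear pass over the $n$ computed values $p_x(G)$ to extract their maximum and minimum adds only $O(n)$ and does not affect the bound. This yields $\vp(G)$ and $\vp^-(G)$ in $O(n^4 \log n)$ time, as claimed. There is no real obstacle here: all the substantive work is in Theorem~\ref{theo:comp}, and the only mild point to be careful about is that the per-vertex running time is stated in terms of $m = |E(G^*_x)|$, which depends on $x$, so one must bound it uniformly by $O(n^2)$ before multiplying by $n$.
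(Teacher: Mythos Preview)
Your proposal is correct and follows essentially the same approach as the paper: invoke Theorem~\ref{theo:comp} once per vertex, uniformly bound the per-call cost by $O(n^3\log n)$ via $m=O(n^2)$, and multiply by $n$. You are slightly more careful than the paper in justifying $\log(n^2/m)=O(\log n)$ and in noting that $m$ depends on $x$, but the argument is the same.
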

\begin{proof}
Given a graph $G$, by calling Algorithm A for each vertex $x$ of $G$, $\vp^-(G)$ and $\vp(G)$ can be easily computed. Since by Theorem~\ref{theo:comp} each call requires  $O(n m \log(n^2/m))$, where $n=|V(G^*_x)|$ and $m=|E(G^*_x)|$. Considering that $n=|V(G^*_x)|=|V(G)|$ and $m=O(n^2)$, each call requires $O(n^3\log(n))$ time, for a total of $O(n^4\log(n))$ time.
\end{proof}

\section*{Acknowledgements}
The fourth author gratefully acknowledges funding support from EPSRC grant EP/W522338/1 and London Mathematical Society grant ECF-2021-27.

\end{document}